\documentclass[reqno,11pt]{amsart}
 
\usepackage{amsmath}
\allowdisplaybreaks[1]
\usepackage{amsthm}  
\usepackage{verbatim}
\usepackage{enumerate} 
\usepackage{mathtools}  
\usepackage{amssymb}
\usepackage{mathrsfs}
\usepackage[top=1.5in, bottom=1.5in, left=0.7in, right=0.7in]{geometry}  
\usepackage[colorlinks]{hyperref}
\hypersetup{
	colorlinks,
	citecolor=blue,
	linkcolor=red
}   
\usepackage{cleveref}
\numberwithin{equation}{section}
\usepackage{bm}
\usepackage{tikz}
\usepackage{tikz-cd}
\usepackage{float}
\usetikzlibrary{arrows.meta}

\newtheorem{theorem}{\textbf{Theorem}}[section]
\newtheorem{theorem*}{\textbf{Theorem}}

\newtheorem{definition}[theorem]{\textbf{Definition}}
\newtheorem{proposition}[theorem]{\textbf{Proposition}}
\newtheorem{lemma}[theorem]{\textbf{Lemma}}

\newtheorem{corollary}[theorem]{\textbf{Corollary}}
\newtheorem{remark}[theorem]{\textbf{Remark}}

\newtheorem{example}[theorem]{\textbf{Example}}
\newtheorem{conjecture}[theorem]{\textbf{Conjecture}}
\newtheorem{definition/proposition}[theorem]{\textbf{Definition/Proposition}}
\newtheorem{innercustomgeneric}{\customgenericname}
\providecommand{\customgenericname}{}
\newcommand{\newcustomtheorem}[2]{%
	\newenvironment{#1}[1]
	{%
		\renewcommand\customgenericname{#2}%
		\renewcommand\theinnercustomgeneric{##1}%
		\innercustomgeneric
	}
	{\endinnercustomgeneric}
}
\newcustomtheorem{customthm}{Theorem}
\newcustomtheorem{customprop}{Proposition}
\newcustomtheorem{customcoro}{Corollary}

\def\N{{\mathbb N}}
\def\R{\mathbb{R}}
\def\Z{{\mathbb Z}}

\def\C{{\mathbb C}}
\def\D{{\mathbb D}}

\def\Q{{\mathbb Q}}
\def\H{{\mathbb H}}

\def\cA{{\mathcal A}}

\def\cL{{\mathcal L}}

\def\hla{{\widehat\lambda}}

\def\hW{{\widehat W}}

\newcommand{\Addresses}{{
		\bigskip
		\footnotesize

        Kainan Guo, \par\nopagebreak
            \textsc{University of Chinese Academy of Sciences, China}\par\nopagebreak
        \textit{E-mail address}: \href{mailto:guokainan20@mails.ucas.ac.cn}{guokainan20@mails.ucas.ac.cn}

	    Zhengyi Zhou, \par\nopagebreak
            \textsc{State Key Laboratory of Mathematical Sciences, Chinese Academy of Sciences;}\par\nopagebreak
	    \textsc{Morningside Center of Mathematics, Chinese Academy of Sciences;}\par\nopagebreak
            \textsc{Academy of Mathematics and Systems Science, Chinese Academy of Sciences, China}\par\nopagebreak
		\textit{E-mail address}: \href{mailto:zhyzhou@amss.ac.cn}{zhyzhou@amss.ac.cn}

}}

\title{The Arnol'd chord conjecture for $ST^*T^3$}
\author{Kainan Guo, Zhengyi Zhou}

\begin{document}
	
\begin{abstract}
We prove the Arnol'd chord conjecture for $ST^*T^3, ST^*(S^1\times S^2),ST^*S^3$ as well as arbitrary connected sums among them.
\end{abstract}
\maketitle
\section{Introduction}\label{s1}
In \cite{Arn86}, Arnol'd stated the following chord conjecture.
\begin{conjecture}[Arnol'd]
    For every closed contact manifold $(Y,\xi)$ and any contact form $\alpha$, every closed Legendrian $\Lambda\subset Y$ carries a Reeb chord.
\end{conjecture}
In comparison to its cousin---the Weinstein conjecture---we only know a handful of contact manifolds for which the chord conjecture holds. Notably, it holds for all contact $3$-manifolds by Hutchings--Taubes \cite{HT13}, for the contact boundary of displaceable Liouville domains by Mohnke \cite{Moh01}, which was generalized to the contact boundary of any Liouville domain with vanishing symplectic cohomology by the second author \cite{Zho22}. There are also cases in which the chord conjecture has been established for certain classes of Legendrian submanifolds in some contact manifolds, e.g.\ for some Legendrian spheres from the surgery perspective by Cieliebak \cite{Cie02}, and for conormals of submanifolds in cosphere bundles by Bro\v{c}i\'c, Cant, and Shelukhin \cite{BCS25}. In this note, we add several classes of contact manifolds for which the chord conjecture holds for all closed Legendrian submanifolds.

\begin{theorem}\label{thm:main}
    The chord conjecture holds for $ST^*T^3$.
\end{theorem}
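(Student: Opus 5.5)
We would prove this by adapting the Mohnke / second-author strategy \cite{Moh01,Zho22}. Note first that the filling $DT^*T^3$ has nonvanishing symplectic cohomology, since $SH^*(T^*T^3)\cong H_{3-*}(\mathcal{L}T^3)\neq 0$. We therefore look for a different filling, or a cobordism, of $ST^*T^3$ with vanishing symplectic cohomology.

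The key observation is that $ST^*T^3\cong T^3\times S^2$, with contact structure determined by the Gauss map $T^3\times S^2\to S^2$. This manifold should arise as the contact boundary of a Liouville (in fact Weinstein, or flexible after suitable handle modifications) domain $W$ with $SH^*(W)=0$. We would look for such a $W$ either by exhibiting $ST^*T^3$ as the boundary of a subcritical or flexible domain, or by realizing it via contact surgery. For example, we would start from $ST^*S^3=S^3\times S^2$, the boundary of $T^*S^3$. We would then perform three flexible $1$-handle-type modifications, or fiber-connected-sum operations along circles, which build $T^3$ from $S^3$ at the level of the base while keeping the new domain's symplectic cohomology zero.

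A cleaner route, which we would try first, is to find a Liouville cobordism $X$ from $ST^*T^3$ (the negative end) to a contact manifold $Y_+$ that admits a filling with vanishing $SH$, e.g.\ the standard $S^{5}$ or a subcritical filling. We then apply the cobordism version of \cite{Zho22}: if $\Lambda\subset ST^*T^3$ had no Reeb chords, we could attach a Weinstein handle along $\Lambda$ (or consider the Lagrangian cylinder/cap). A neck-stretching argument then shows that the unit in $SH$ of the glued filling survives. This contradicts vanishing, provided the relevant moduli of holomorphic curves do not break into the negative end. Concretely, the steps are:
\begin{enumerate}[(i)]
\item construct $W$ (or the cobordism) with $SH^*=0$;
\item show the vanishing persists in the sense needed, e.g.\ $1=0$ is realized by a Floer cylinder of action bounded by some constant independent of $\Lambda$;
\item run Mohnke's argument: displace or stretch along $\partial W$, producing a holomorphic curve whose SFT-limit contains a Reeb chord of $\Lambda$.
\end{enumerate}
The connected sums among $ST^*T^3$, $ST^*(S^1\times S^2)$ and $ST^*S^3$ would then follow, since contact connected sum corresponds to boundary connected sum of fillings, which preserves vanishing of $SH$.

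The main obstacle is step (i): producing an exact filling or cobordism of $T^3\times S^2$ with its canonical structure whose symplectic cohomology vanishes. The canonical filling $T^*T^3$ fails, so we need an exotic filling. Our first attempt would be to write $T^*T^3 = T^*T^2\times T^*S^1$ and pass to a Lefschetz fibration/open book description. We would then modify the page, replacing $T^*S^1$ factors by $\C$-like factors through a handle trade or flexibilization, checking at each stage that the boundary contact structure is unchanged (up to isotopy) via an $h$-principle for flexible handles. If that fails, we would fall back on the weaker input of \cite{Zho22}-style ``$k$-dilation'' or vanishing of a truncated $SH$ after a fixed action, which suffices for the chord argument.
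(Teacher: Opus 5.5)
\noindent\textbf{Step (i) is the gap, and it fails for the fillings you propose.} Your argument rests on producing a filling or cobordism of $ST^*T^3$ with vanishing $SH^*$. For the fillings you have in mind this is not merely unproved but false.

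Let $W$ be any Weinstein filling of $ST^*T^3$; this includes subcritical and flexible fillings. Since $\dim W=6$ and Weinstein handles have index $\le 3$, $W$ is obtained from a collar of $\partial W$ by attaching handles of index $\ge 3$. Hence $\pi_1(ST^*T^3)\to\pi_1(W)$ is an isomorphism.

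Now use the contact form of a flat metric. Every Reeb orbit lifts a closed geodesic of $T^3$, so it is non-trivial in $\pi_1(ST^*T^3)\cong\Z^3$ and therefore non-contractible in $W$. The Floer differential and continuation maps preserve free homotopy classes, so for every $a$ the contractible part of $CF^*(H_a)$ is just $CF^*_0(H_a)$. Thus $H^*(W)$ is a direct summand of $SH^*(W)$, and $1\neq 0$. The same reasoning excludes your fallback of vanishing truncated $SH$ or $k$-dilations, since all of these need contractible non-constant orbits.

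For a merely Liouville filling, you would at least need $\pi_1(ST^*T^3)\to\pi_1(W)$ to kill a geodesic class, and nothing in your plan produces that.

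The cobordism variant does not escape the problem. Mohnke's Lagrangians $\Lambda\times S^1$ live in a collar of $ST^*T^3$, so in practice the vanishing-$SH$ domain must be $W\cup X$ with $W$ a Liouville filling of $ST^*T^3$. The unital Viterbo transfer $SH^*(W\cup X)\to SH^*(W)$ would then force $SH^*(W)=0$. Starting from $ST^*S^3$ does not help either, since $SH^*(T^*S^3)\neq 0$. This is exactly why the paper notes that $A_{\min}(DT^*T^3)=\infty$ and that \cite{Moh01,Zho22} do not apply directly.

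\noindent\textbf{What the paper does instead.} It keeps the standard filling $DT^*T^3$. It bounds $A'_{\min}$ only on the class $\cL_{\Lambda,\iota}$ of Lagrangians $\Lambda\times S^1$ with contractible $S^1$-factor, which is all \Cref{thm:Mohnke} needs. It then uses that $\Lambda$ is a surface and splits by topology.

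\emph{Low genus.} If $\Lambda\in\{S^2,\R P^2,T^2,K^2\}$, the image of $H_1(\Lambda)$ in $H_1(T^3)=\Z^3$ has rank $\le 2$. After a $GL(3,\Z)$ change it misses one circle direction. The symplectic but non-exact embedding $DT^*S^1\hookrightarrow D^2$, $(p,\theta)\mapsto(\sqrt{p+2},\theta)$, shifts the Liouville form by $2\,\mathrm{d}\theta$, which is exact on such $\Lambda\times S^1$. Hence $A'_{\min}\le A_{\min}(DT^*T^2\times D^2)<\infty$ (Lemma~\ref{lem:capping}, Corollary~\ref{cor:others}). This is the correct form of your ``replace a $T^*S^1$ factor by $\C$'' idea. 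It changes both the boundary and the exact structure, and it only works when $H_1(\Lambda)$ misses a direction.

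\emph{High genus.} For hyperbolic $\Lambda$ the image can have rank $3$, so capping off fails. The paper instead takes the quasi-dilation $(1\otimes t,\,1\otimes t\otimes a)$ of $SH^*(DT^*T^3;\Z/2)$, which is supported on non-contractible orbits. The truncated Viterbo transfer of Proposition~\ref{prop:truncate}, valid once $A'_{\min}>4A$, pushes it into $SH^*(T^*(\Lambda\times S^1);\Z/2)$. The loop homology of a negatively curved $\Lambda$ has no quasi-dilation (Proposition~\ref{prop:negaCurv}). So the image would have to lie over a non-trivial power of the $S^1$-factor, which is contractible in $DT^*T^3$, a contradiction.

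Neither ingredient, nor the genus split itself, appears in your proposal.
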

The proof of \Cref{thm:main} follows the mechanism discovered by Mohnke: if the chord conjecture fails for $\Lambda\subset Y$, then one can find a particular Lagrangian submanifold $L$ of the form $\Lambda\times S^1$ with arbitrarily large $A_{\min}(L,W)$ inside a Liouville filling $W$ of $Y$. Here we define
\begin{equation}\label{def:minimalarea}
        A_{\min}(L,W):=\inf\left\{\int u^*\omega\left|\, u\in\pi_2(W,L),\,\int u^*\omega>0\right.\right\}\in[0,+\infty],
\end{equation}
where the infimum over the empty set is $\infty$. By taking the supremum of $A_{\min}(L,W)$ over all Lagrangian submanifolds $L\subset W$, we obtain $A_{\min}(W)$. Unlike the cases in \cite{Moh01,Zho22}, where $A_{\min}(W)$ is finite, $A_{\min}(DT^*T^3)$ is certainly infinite, as it contains an exact Lagrangian torus. In general, cosphere bundles of closed manifolds form an interesting class of contact manifolds in which to test the chord conjecture, since the boundedness of $A_{\min}$ fails. Although the general results in \cite{Zho22} do not apply directly, the idea of the truncated Viterbo transfer map developed in \cite{Zho22} can be further exploited, with the additional feature that we only need to consider Legendrian surfaces in \Cref{thm:main}. We separate the proof into the low-genus ($\le 1$) and high-genus ($\ge 2$) cases. The low-genus case follows from the geometric construction of capping-off and reduction to the cases treated in \cite{Moh01,Zho22}. The higher-genus case requires the truncated Viterbo transfer map for the BV algebra\footnote{\cite{Zho22} only considered the BV operator, not the whole BV algebra.} structure on symplectic cohomology, as well as some input from the string topology of surfaces.

The ideas behind \Cref{thm:main} also yield the following cases of the chord conjecture.
\begin{theorem}\label{thm:chord}
    The chord conjecture holds for the following pairs $(Y, \Lambda)$:
    \begin{enumerate}
        \item\label{c1} $Y=\partial V$, where $V$ is a Liouville domain that embeds \emph{symplectically} into a Liouville domain $W$ with $SH^*(W;\Z)=0$, and $\Lambda$ is a closed Legendrian submanifold satisfying $H_1(\Lambda;\Q)=0$;
        \item\label{c2} $Y=\partial (DT^*S^1\times V)$, where $V$ is a Liouville domain and $\Lambda$ admits a metric of negative sectional curvature.
    \end{enumerate}
\end{theorem}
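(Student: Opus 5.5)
Both parts run on Mohnke's mechanism. Suppose $\Lambda\subset Y=\partial V$ has no Reeb chord for some contact form $\alpha$. Then the Legendrian isotopy $\Lambda_t=\phi^R_t(\Lambda)$ given by the Reeb flow stays embedded for all time. Following \cite{Moh01}, we use this flow to build, inside the completion of $V$ (or of any Liouville domain containing $V$), an exact-in-the-collar Lagrangian embedding
\[
L_T\cong\Lambda\times S^1 .
\]
It is obtained by suspending the Reeb flow over a large loop in the symplectization. Its minimal positive disk area satisfies $A_{\min}(L_T)\ge cT$, with $T$ as large as we like. Moreover, $\pi_2(W,L_T)$ is generated by the following pieces:
\begin{itemize}
\item disks whose boundary winds $k$ times around the $S^1$ factor, with area $kT$ up to a bounded error;
\item classes coming from $\pi_1(\Lambda)$, with areas independent of $T$ and in fact zero.
\end{itemize}
In each case the goal is a contradiction from a universal upper bound on the area of some holomorphic disk or cylinder with boundary on $L_T$.

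\textbf{Part (1).} Embed $V\subset W$ symplectically, so that $L_T\subset W$. Since $SH^*(W;\Z)=0$, the truncated Viterbo transfer argument of \cite{Zho22} shows that the unit dies at some finite action level $D$. Hence $A_{\min}(L,W)\le D$ for every Lagrangian $L$ that is displaceable in the appropriate Floer-theoretic sense, i.e.\ for which a nonconstant holomorphic disk of area $\le D$ through a generic point exists. Because $V$ embeds only symplectically, not exactly, we apply the argument to $L_T$ viewed in $W$. The disk produced has boundary class $\gamma\in H_1(L_T)=H_1(\Lambda)\oplus\Z$, and its area is computed by integrating the primitive $\lambda_V$ along $\gamma$, plus a $T$-independent correction from $\omega_W-d\lambda_V$.

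Here the hypothesis $H_1(\Lambda;\Q)=0$ is used. The boundary class is rationally a multiple $k[S^1]$. If $k\neq0$ the area is at least $|k|T-C$. If $k=0$ the boundary is rationally null in $L_T$, so the area is computed from a closed class and is bounded. One still needs positivity to rule this case out: area zero is impossible for a nonconstant holomorphic disk, and the correction term vanishes on torsion classes. Letting $T\to\infty$ then gives the contradiction. The main obstacle is making the truncated transfer and the resulting area bound work for a non-exact symplectic embedding. I would handle it by working with a fixed neighborhood of $V$ in $W$ on which $\omega$ is exact.

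\textbf{Part (2).} Here $Y=\partial(DT^*S^1\times V)$. We split $W=DT^*S^1\times V$ and use the $S^1$-factor to produce a Hamiltonian $S^1$-symmetry. Symplectic cohomology of $T^*S^1\times V$ is not zero, so instead we use the BV-algebra structure. The class $[S^1]$-loop, i.e.\ the generator $p\,d\theta$ with winding number one, gives a closed-string element $x$ of degree $\ge0$ whose image under the truncated Viterbo transfer to $L_T$ must be nonzero in string topology of $L_T=\Lambda\times S^1$. Nonvanishing of the image forces a holomorphic annulus or disk of bounded area, namely area bounded by the action of $x$, independent of $T$.

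The boundary of that curve gives free loops in $\Lambda\times S^1$ representing a fixed free homotopy class built from the $S^1$ factor. For these curves to have area $\gg T$ we need the $\Lambda$-component of the boundary loops not to cancel the $S^1$ winding. This is where negative curvature enters. Since $\Lambda$ is $K(\pi,1)$ with centerless, torsion-free $\pi_1$ in which each nontrivial free class contains a unique closed geodesic, the string topology of $\Lambda$ has no contributions from contractible loops pairing nontrivially. Hence the transferred class must involve winding $k\neq0$ in $S^1$, and the area is at least $T-C$.

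Concretely, I would compute the loop-space homology of $\Lambda\times S^1$ with its BV structure, show that the relevant component of the transfer map factors through constant-in-$\Lambda$ loops, and obtain the contradiction. The hardest step is the string-topology computation showing that no class supported on non-contractible loops of $\Lambda$ can absorb the image. I would try first to use the fact that, in negative curvature, the BV operator acts trivially between the relevant components.
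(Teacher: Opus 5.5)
Your Part (1) is essentially the paper's argument, once two slips are corrected. First, the bound from \cite{Zho22} is $A'_{\min}(L,W)\le A_{\min}(W)<\infty$ for \emph{every} closed Lagrangian $L\subset W$; it needs no displaceability-type hypothesis. Second, the non-exactness of $V\subset W$ is not an obstacle: $L_T$ is simply a closed Lagrangian in the Liouville domain $W$. Since $H_1(\Lambda;\Q)=0$, only the $S^1$-factor of $L_T$ matters, and it bounds a disk inside $V$. Because $\lambda_W|_V-\lambda_V$ is closed on $V$, your correction term is exactly zero. Hence $A'_{\min}(L_T,W)=A'_{\min}(L_T,V)$, which is the area enclosed by Mohnke's loop.

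Part (2) has a genuine gap: the mechanism is wrong and the key idea is missing.
\begin{itemize}
\item Proposition~\ref{prop:truncate} does not produce holomorphic annuli or disks on $L_T$ when a transferred class is nonzero. It says that once $A'_{\min}(L_T,W)>4A$, a transfer $SH^*_{<A}(W)\to SH^*(T^*L_T)$ \emph{exists}, compatible with unit, product and $\Delta$. The contradiction must therefore be that such a map cannot exist, not an area estimate $\ge T-C$.
\item You give no reason why the image of $x$ is nonzero or constrained. The missing idea is the quasi-dilation. The pair $(x,y)=(1\otimes t,\,1\otimes t\otimes a)$ satisfies $\Delta y=x$ with $x$ invertible, all visible below some finite action. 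Its image is therefore a quasi-dilation in $\H^*(\Lambda;\Z/2)\otimes\H^*(S^1;\Z/2)$.
\item Negative curvature enters only through Proposition~\ref{prop:negaCurv}: $\H^*(\Lambda)$ has no quasi-dilation. Its degree-$0$ part is just the unit, and $\Delta$ kills constant loops and preserves free homotopy classes. This forces the image of $x$ to be $1\otimes t^i$, i.e.\ supported on multiples of the $S^1$-factor of $L_T$.
\item Here you conflate two different circles. The $S^1$-factor of $L_T$ bounds a disk in $W$, since it is Mohnke's loop. By contrast, $x$ lives in the non-contractible class of the $DT^*S^1$-circle.
\end{itemize}
Since the transfer preserves free homotopy classes, this mismatch is the contradiction. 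The ``winding $k\neq 0$ in $S^1$'' that you try to turn into an area bound is precisely what violates homotopy-class preservation. Your unresolved ``hardest step'' is exactly this quasi-dilation computation. (If $SH^*(V;\Z/2)=0$, one simply quotes \cite{Zho22}.)
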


Condition \eqref{c1} of \Cref{thm:chord} is satisfied in many cases, e.g.\ (1) $Y=\partial (DT^*S^1\times V)$, since $DT^*S^1\times V$ embeds symplectically into $\D\times V$, and (2) the contact boundary of the complement of a complex hyperplane arrangement in $\C^n$. In particular, by combining \eqref{c1} with condition \eqref{c2} of \Cref{thm:chord}, we see that the chord conjecture holds for $ST^*(S^1\times \Sigma)$ for any surface $\Sigma$ and any Legendrian surface that is neither a torus nor a Klein bottle. More generally, there are a few other cases of cosphere bundles of $3$-manifolds and their modifications for which we can establish the chord conjecture.
\begin{theorem}\label{thm:CotBundle}
    The chord conjecture holds for the following contact $5$-manifolds:
    \begin{enumerate}
        \item\label{cot1} $ST^*S^3$ and $ST^*(S^1\times S^2)$.
        \item\label{cot2} Arbitrary contact connected sums (duplicates allowed) among

        $$\{ST^*T^3,\ ST^*S^3,\ ST^*(S^1\times S^2),\ ST^*(S^1\times \R P^2),\ \partial V \text{ with } SH^*(V;\Z)=0\}.$$
    \end{enumerate} 
\end{theorem}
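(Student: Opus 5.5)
We follow the mechanism of \Cref{thm:main}. Suppose $\Lambda\subset Y$ is a closed Legendrian surface with no Reeb chord for some contact form. By Mohnke's construction, for every $C>0$ we get a Lagrangian $L=\Lambda\times S^1$ in a Liouville filling $W$ of $Y$ with $A_{\min}(L,W)>C$. We then derive a contradiction from an upper bound on $A_{\min}(L,W)$ that is uniform in $C$. Such a bound comes from the truncated Viterbo transfer map to a Weinstein neighbourhood of $L$. The fillings we use are $W=DT^*S^3$, $DT^*(S^1\times S^2)$, and boundary connected sums of $DT^*T^3$, $DT^*S^3$, $DT^*(S^1\times S^2)$, $DT^*(S^1\times\R P^2)$ and $V$. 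As in \Cref{thm:main}, we split according to the topology of $\Lambda$.

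\textbf{Case $H_1(\Lambda;\Q)=0$, i.e.\ $\Lambda=S^2$ or $\R P^2$.} Here \Cref{thm:chord}\eqref{c1} applies once we embed each filling symplectically into a Liouville domain with vanishing $SH^*(\,\cdot\,;\Z)$.
\begin{enumerate}
\item $DT^*S^3$ embeds into a subcritical domain; for instance, a Weinstein neighbourhood of the Lagrangian $S^3\subset \C^3$ given by the Whitney sphere after removing a point, or equivalently the standard handle embedding.
\item $DT^*(S^1\times S^2)\subset DT^*S^1\times DT^*S^2$, and $DT^*S^1\subset\D$, so it embeds into $\D\times DT^*S^2$, which has vanishing $SH$ by Künneth.
\item For $S^1\times\R P^2$ and $T^3=S^1\times T^2$, the same trick embeds the cotangent disk bundles into $\D\times DT^*(\cdot)$.
\item For the connected sums, we take the boundary connected sum of these ambient domains. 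This preserves vanishing of $SH$, and the Weinstein 1-handle attachments are compatible with the embeddings.
\end{enumerate}

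\textbf{Case of higher genus.} If $\Lambda$ admits a metric of negative curvature (genus $\ge2$, or nonorientable of Euler characteristic $<0$), we adapt \Cref{thm:chord}\eqref{c2}. The filling of each summand is (up to the cases $S^3$ and $V$ with $SH=0$) of the form $DT^*S^1\times DT^*\Sigma$. We use the BV-algebra truncated transfer: the unit of $SH^*$, or a loop class in the $S^1$-direction, must map to a class whose string-topology structure in $\Lambda\times S^1$ is impossible when $\pi_1(\Lambda)$ is hyperbolic, unless some disk of bounded area exists. For the connected sum, we use the Viterbo transfer from the connected sum to each summand together with the Cieliebak-type decomposition of $SH$ of a boundary connected sum, which reduces to the summands. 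The $S^3$ and $SH=0$ summands contribute no nontrivial loop classes, and the unit dies there, giving Mohnke-type bounds.

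\textbf{Case of genus $\le1$ with infinite $H_1$.} This covers the torus and the Klein bottle. We use the capping-off construction from the low-genus part of \Cref{thm:main}: cap $\Lambda$ off to reduce to a Legendrian sphere, or to a setting where $A_{\min}$ of the relevant filling is finite. For $ST^*S^3$, $A_{\min}(DT^*S^3)$ is bounded via the embedding above, so \cite{Zho22} applies directly.

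The main obstacle I expect is the higher-genus case for the connected sums. There one must control the BV operations on $SH^*$ of a boundary connected sum and confirm that the relevant loop classes of the $T^3$ or $S^1\times S^2$ summands survive the transfer maps uniformly in the truncation level. I would first try to build a Liouville cobordism from the connected sum to the disjoint union of the summands and check functoriality of the truncated BV transfer along it.
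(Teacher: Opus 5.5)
\textbf{The $S^3$ cases rest on a false claim.} $DT^*S^3$ does not embed symplectically into any Liouville domain $W$ with $SH^*(W;\Z)=0$, subcritical or not. The zero section would be a Lagrangian $S^3\subset W$, automatically exact since $H^1(S^3;\R)=0$. Hence $A_{\min}(S^3,W)=\infty$, which contradicts $A_{\min}(W)<\infty$ from \cite[Theorem 1.3 (1)]{Zho22}. Equivalently, the unital transfer $SH^*(W)\to SH^*(T^*S^3)\cong H_{3-*}(\cL S^3)\neq 0$ cannot start from the zero ring. The Whitney sphere is only immersed, and the handle picture of $DT^*S^3$ uses a \emph{critical} handle.

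For the same reason, your later claims that the unit ``dies'' on $S^3$ summands and that $A_{\min}(DT^*S^3)$ is bounded are false; in fact $A_{\min}(DT^*S^3)=\infty$. So none of your three cases covers $ST^*S^3$, or any connected sum with an $ST^*S^3$ summand. Your use of Theorem~\ref{thm:chord}\eqref{c1} is fine only when no $S^3$ summand is present.

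The missing idea is the dilation. $SH^*(T^*S^3;\Z/2)$ has a $\Z/2$-dilation (Example~\ref{BVS2}). By contrast, $SH^*(T^*(\Lambda\times S^1);\Z/2)\cong\H^*(\Lambda;\Z/2)\otimes\H^*(S^1;\Z/2)$ has none for \emph{every} closed surface $\Lambda$: $2$ is not invertible for $S^2$, Example~\ref{BVRP2} covers $\R P^2$, and non-positive curvature covers the rest. Since the truncated transfer of Proposition~\ref{prop:truncate} respects the BV structure, Proposition~\ref{prop:dia} bounds $A'_{\min}(\cL_{\Lambda,\iota},DT^*S^3)$ for all $\Lambda$ at once. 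Combined with Cieliebak's splitting $SH^*(V_1\natural V_2)\cong SH^*(V_1)\oplus SH^*(V_2)$, the same argument handles $S^2$ and $\R P^2$ in connected sums after capping off the $S^1\times\Sigma$ summands.

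\textbf{The torus/Klein bottle case is also incomplete.} This concerns $ST^*(S^1\times S^2)$ and the $S^1\times S^2$, $S^1\times\R P^2$ summands. Lemma~\ref{lem:capping} requires $H_1(\Lambda;\Q)\to H_1(DT^*S^1;\Q)$ to vanish. For $T^3$ this can be arranged by a $GL(3,\Z)$ change of splitting, as in Corollary~\ref{cor:others}. In $S^1\times S^2$, however, the Legendrian torus may wind around the only circle factor, so there is nothing to cap off, and ``reducing to a Legendrian sphere'' has no meaning. The paper instead uses that $SH^*(T^*(S^1\times S^2);\Q)$ and $SH^*(T^*(S^1\times\R P^2);\Q)$ carry $\Q$-dilations, coming from $S^2$ and $\R P^2$ via Proposition~\ref{prop:dilation}(2). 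Meanwhile $SH^*(T^*(\Lambda\times S^1);\Q)$ has none for $\Lambda=T^2,K^2$ (Remark~\ref{BVT2}), and Proposition~\ref{prop:dia} again concludes.

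\textbf{The higher-genus step needs the actual mechanism.} The unit yields nothing, since it maps to the unit. What is used is a quasi-dilation represented by non-contractible orbits of a $DT^*S^1$ factor. Its image would be a quasi-dilation of $\H^*(\Lambda)\otimes\H^*(S^1)$, which Proposition~\ref{prop:negaCurv} forces into a non-trivial class of the $S^1$-factor of $L$. That is impossible, because this factor is contractible in $W$.

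\textbf{The reduction to summands is not valid.} $L$ need not lie in a single summand, so transferring to each summand tells you nothing about $L$. You need the (quasi-)dilation in $SH^*$ of the whole connected sum.
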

\begin{remark}
    It is clear that if the chord conjecture holds for $Y$, then it holds for any finite cover of $Y$. As a consequence, for a $3$-dimensional manifold $M$ carrying one of the eight geometries, the chord conjecture holds for $ST^*M$ when $M$ has $S^3$, $E^3$, or $S^2\times E$ geometry. 
\end{remark}

\begin{remark}
    In the forthcoming paper of Bro\v{c}i\'c and Cant \cite{BC}, the authors find another condition on the symplectic cohomology as a BV algebra that guarantees the validity of the chord conjecture for all Legendrians. In particular, they show that the chord conjecture holds for $ST^*(S^{n_1}\times \cdots \times S^{n_k})$ for any $n_i\in \N_+$.
\end{remark}

\subsection*{Acknowledgments}
The authors would like to thank Dylan Cant for helpful discussions and for informing us of their forthcoming paper \cite{BC}. This project is supported by the National Key R\&D Program of China under Grant No.\ 2023YFA1010500 and by the National Natural Science Foundation of China under Grants No.\ 12288201 and 12231010.

\section{Symplectic cohomology}\label{s2}

In this section, we briefly review the construction of symplectic cohomology for Liouville domains and explain the additional algebraic structures carried by symplectic cohomology. We then discuss the Viterbo transfer map and its truncated version. We refer the reader to \cite{CO18, Rit13} for a more complete and detailed treatment.

\subsection{Properties of symplectic cohomology}\label{2.1.1}
\subsubsection{Definition}
Let $(W,\lambda)$ be a Liouville domain, and let $(\hW, \hla)=(W\cup \partial W\times[1,\infty)_r,\ e^{r}\lambda)$ be its completion, where $\partial W$ is identified with $\partial W\times \{1\}$ and the Liouville vector field provides a collar neighborhood $(0,1]\times \partial W$ of $\partial W$, which is used to define the smooth structure on the completion. Let $H:S^1\times \hW\to\R$ be a time-dependent Hamiltonian on $\hW$. For a loop $\gamma: S^1\to\hW$, the action functional is given by
\begin{equation}
    \cA_H(\gamma)=-\int \gamma^*\hla+\int_{0}^{1}H(t,\gamma(t))\,\mathrm{d}t,
\end{equation}
where $\mathrm{d}\hla(\cdot,X_H)=\mathrm{d}H$. The critical points of the action functional correspond to the $1$-periodic Hamiltonian orbits. We call a Hamiltonian \emph{admissible} if it is a $C^2$-small nondegenerate perturbation of a Hamiltonian $K_a$ which is zero on $W$ and linear at infinity with slope $a$, where $a$ is not the period of any Reeb orbit on $\partial W$; see Figure~\ref{fig:K_a}. We denote such an admissible Hamiltonian by $H_a$.\par
\begin{figure}
    \centering
    \begin{tikzpicture}[>=Stealth, scale=1]
    
    \draw[->, thick] (0,0) -- (4.5,0) node[below] {$r$};
    \draw[->, thick] (0,0) -- (0,2) node[left] {$H_a$};

    \draw[ultra thick] (0,0) -- (1.8,0);
    
    \draw[ultra thick] (1.8,0) .. controls (2.2,0) and (2.4,0.1) .. (2.6,0.3);
    
    \draw[ultra thick] (2.6,0.3) -- (3.8,1.4);

    \draw (1.8, 0.05) -- (1.8, -0.05) node[below] {$1-\delta$};
    \draw (2.6, 0.05) -- (2.6, -0.05) node[below] {$1$};

    \draw[<-] (0.8, -0.1) -- (0.5, -0.3) node[below, align=center] {constant \\ orbits};
    
    \draw[<-] (2.3, 0.2) -- (2.1, 1.2) node[above, align=center] {non-constant \\ orbits};
    
    \node[right] at (3.4, 1.5) {slope $= a$};

\end{tikzpicture}
    \caption{Definition of $H_a$.}
    \label{fig:K_a}
\end{figure}
Fix a suitable almost complex structure $J$. The Floer cochain complex $CF^*(H;R)$ of a Hamiltonian $H$ with $R$-coefficients is generated by the critical points of $\cA_H$, and the differential is given by counting rigid Floer cylinders asymptotic to Hamiltonian orbits in both the positive and the negative directions. In particular, under the assumption that $\lambda|_{\partial W}$ has no degenerate Reeb orbit of period at most $a$ and that $H_a$ is in the special form of \cite{CO18}, $CF^*(H_a;R)$ is generated by constant orbits in the interior of $W$, corresponding to the Morse cochain complex of $W$, and by non-constant orbits in the collar neighborhood of $\partial W$, which come in pairs corresponding to Reeb orbits on $\partial W$ of period at most $a$. We denote the Floer cohomology of $H_a$ by $HF^*(H_a;R)$. 

Inside $CF^*(H_a;R)$, there is a subcomplex $CF_0^*(H_a;R)$ generated by those constant orbits on $W$, whose cohomology is isomorphic to $H^*(W;R)$. The quotient complex is denoted by $CF^*_+(H_a;R)$, and its cohomology is denoted by $HF^*_+(H_a;R)$. These groups fit into a long exact sequence in the usual way.

\begin{proposition}[See for example {\cite[\S 3e]{Sei08}}]
The truncated symplectic cohomology groups
\[
SH^*_{<a}(W;R):=HF^*(H_a;R),\qquad SH^*_{+,<a}(W;R):=HF^*_+(H_a;R)
\]
are well-defined, i.e.\ independent of the choice of $H_a$, $J$, etc.
\end{proposition}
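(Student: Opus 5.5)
The plan is the standard continuation argument: compare any two admissible data by continuation maps, show the compositions are homotopic to the identity, and check that everything respects the filtration by constant orbits. Fix two admissible choices $(H_a,J)$ and $(H'_a,J')$, both $C^2$-small nondegenerate perturbations of Hamiltonians $K_a,K'_a$ that vanish on $W$ and have the same slope $a$ at infinity. Here $a$ is not a Reeb period. I would first reduce to a common model: any two such $K_a$ can be joined through Hamiltonians of slope exactly $a$ at infinity.

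\textbf{Step 1: continuation maps.} Choose a homotopy $H_s$ interpolating between $H_a$ and $H'_a$ that is linear of constant slope $a$ outside a compact set, together with a homotopy $J_s$ of contact-type almost complex structures. Counting rigid solutions of the $s$-dependent Floer equation gives a chain map $\Phi:CF^*(H_a)\to CF^*(H'_a)$.

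\textbf{Step 2: compactness.} This is the main obstacle. Since the slope at infinity is fixed but $\partial_s H_s$ need not be nonpositive, I cannot use the naive monotone energy estimate. Instead:
\begin{itemize}
\item The energy is bounded by the action difference plus $\int \|\partial_s H_s\|_\infty$. Because $\partial_s H_s$ is compactly supported, this bound is finite.
\item Solutions cannot escape to infinity, by the maximum principle applied to $r\circ u$ in the cylindrical end. This works because $H_s=h_s(r)$ there with $h_s$ independent of $s$ (slope $a$ fixed) and $J_s$ is of contact type.
\end{itemize}
Gromov compactness then applies. Since $\omega$ is exact, there is no sphere bubbling, so $\Phi$ is well defined.

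\textbf{Step 3: inverses.} The reverse homotopy gives $\Psi$. The concatenation $\Psi\circ\Phi$ is homotopic, through a one-parameter family of homotopies all of slope $a$ at infinity, to the constant homotopy, which induces the identity. The usual parametrized moduli space argument then gives a chain homotopy $\Psi\circ\Phi\simeq \mathrm{id}$. Here I need the maximum principle uniformly in the family. Hence $HF^*(H_a)$ is independent of choices, and the induced isomorphisms are canonical, compatible with composition.

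\textbf{Step 4: the subcomplex $CF^*_0$ and the quotient.} I must show that $\Phi$ preserves $CF^*_0$, so that it descends to $CF^*_+$. Use action: constant orbits have action near $0$ (of order the $C^2$-perturbation). Non-constant orbits sit near level $r$ with $h'(r)=T$ a Reeb period $\le a$, and have action approximately $-e^rT+h(r)<-c<0$, with $c$ depending only on the minimal Reeb period.

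Arrange the homotopy $H_s$ so that $\int\|\partial_s H_s\|_\infty$ is much smaller than $c$. I can achieve this by deforming only the small perturbations near the fixed $K_a$: the space of $K_a$ of slope $a$ is convex, and I subdivide the interpolation into small steps, composing them. With this choice, action increases by at most $\epsilon$ along continuation strips. A strip from a constant orbit cannot then reach a non-constant orbit, given the convention that the differential increases action. So $\Phi(CF_0)\subset CF_0$, and likewise for the chain homotopies.

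By the five lemma on the long exact sequence, together with the functoriality above, $SH^*_{+,<a}$ is independent of choices. Independence of $J$ is the special case $H_s\equiv H_a$.
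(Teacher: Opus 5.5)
Your argument is correct, and it is the standard one that the paper relies on by citing Seidel instead of giving a proof. It uses continuation maps between Hamiltonians of equal slope $a$, with compactness from the bound via $\int\|\partial_s H_s\|_\infty$ and the maximum principle at the end, homotopies of homotopies for the inverses, and the action gap between constant orbits (action near $0$) and non-constant orbits (action $<-c$), combined with small-step subdivision, to show that $CF^*_0$ and the chain homotopies are preserved, so $HF^*_+$ is invariant too. The one implicit assumption is that the $K_a$ are convex in the radial variable, as in the paper's figure; this gives both the bound $<-c$ and the restriction to Reeb periods $\le a$, and it is preserved by your linear interpolations.
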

There is a continuation map $c:SH^*_{<a}(W;R)\to SH^*_{<b}(W;R)$ for $a\leq b$, which on the cochain level is compatible with the decomposition into $CF_0$ and $CF_+$. The full (positive) symplectic cohomology is the direct limit
\[
SH^*(W;R):=\varinjlim_{a\to\infty} SH^*_{<a}(W;R), \qquad SH^*_+(W;R):=\varinjlim_{a\to\infty} SH^*_{+,<a}(W;R),
\]
and we obtain the long exact sequence
\begin{equation}\label{splitting}
    \cdots \to H^*(W;R) \to SH^*(W;R) \to SH_+^*(W;R) \to H^{*+1}(W;R) \to \cdots .
\end{equation}
\begin{remark}
    $SH^*(W)$ admits a canonical $\Z/2$-grading given by $\dim_{\C}W-\mu_{\mathrm{CZ}}$, where $\mu_{\mathrm{CZ}}$ is the Conley--Zehnder index. This grading can be lifted to a $\Z$-grading after choosing a trivialization of $\det_{\C}TW$. In particular, $SH^*(T^*Q)$ admits a canonical $\Z$-grading \cite[\S 9.4.5]{Abo15}. Symplectic cohomology also admits a grading by $\mathrm{Conj}(\pi_1(W))$, coming from homotopy classes of Hamiltonian orbits, since the asymptotics of a Floer cylinder lie in the same homotopy class.
\end{remark}

\subsubsection{Pair-of-pants product and BV operator}
Symplectic cohomology is equipped with TQFT-type algebraic structures. The standard treatment of these structures can be found, e.g., in \cite{Abo15,Rit13}. From now on, we assume that the Liouville domain $(W,\lambda)$ satisfies $c_1(TW,J)=0$, so that $SH^*(W;R)$ admits a $\Z$-grading once we fix a trivialization of $\det_{\C} TW$.\par
The pair-of-pants product
\begin{equation}
    \psi_P:SH^i_{<A}(W;R)\otimes SH^j_{<B}(W;R)\to SH^{i+j}_{<A+B}(W;R)
\end{equation}
is defined by counting rigid curves solving suitable Floer equations on the pair-of-pants surface. It is compatible with continuation maps, which yields a product on $SH^*(W;R)$.\par
The pair-of-pants product $\psi_P$ is:
\begin{itemize}
    \item graded-commutative: $\psi_P(x,y)=(-1)^{|x|\,|y|}\psi_P(y,x)$ \cite[\S 6.2]{Rit13};
    \item associative: $\psi_P(x,\psi_P(y,z))=\psi_P(\psi_P(x,y),z)$ \cite[\S 6.2]{Rit13};
    \item unital: there exists $e\in SH^*(W)$ (which need not be nonzero) such that $\psi_P(e,x)=\psi_P(x,e)=x$ \cite[\S 6.3]{Rit09}.
\end{itemize}
Furthermore, the pair-of-pants product $\psi_P$ on $SH^*(W)$ is compatible with the usual cup product on $H^*(W)$, so that $H^*(W;R) \to SH^*(W;R)$ is a unital ring homomorphism.\par

By counting Floer-type cylinders with an asymptotic marker that moves in an $S^1$ family at the negative end, we associate to symplectic cohomology a degree $-1$ map $\Delta:SH^*(W;R)\to SH^{*-1}(W;R)$, called the Batalin--Vilkovisky operator. In string topology, the natural $S^1$-action
\begin{equation}
\begin{split}\label{S1act}
    \rho:S^1\times \cL Q&\to \cL Q,\\
    (t,\gamma(s))&\mapsto \gamma(t+s),
\end{split}
\end{equation}
induces the BV operator of degree $1$ on loop homology:
\begin{equation}
    \begin{split}
        \Delta:H_*(\cL Q;R)&\to H_{*+1}(\cL Q;R),\\
        x&\mapsto \rho_*([S^1]\otimes x),
    \end{split}
\end{equation}
where $[S^1]\in H_1(S^1)$ is the fundamental class of $S^1$. These two operators coincide via the Viterbo isomorphism when $W=T^*Q$; see Theorem~\ref{Vitiso}.
\begin{proposition}[\cite{Abo15}]\label{prop:BV}The BV operator has the following properties:
\begin{enumerate}
    \item $\Delta$ does not decrease the symplectic action. In particular, it is defined on the filtered symplectic cohomology, $\Delta: SH^*_{<a}(W;R)\to SH^*_{<a}(W;R)$.
    \item $\Delta$ preserves the homotopy classes of Hamiltonian orbits.
    \item $\Delta^2=0$ on symplectic cohomology.
    \item $\Delta=0$ on $SH^*_{<\epsilon}(W;R)=H^*(W;R)$ for $0\le \epsilon \ll 1$.
\end{enumerate}
\end{proposition}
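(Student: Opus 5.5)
I will prove the four properties of Proposition~\ref{prop:BV} directly from the chain-level definition of $\Delta$. On $CF^*(H_a;R)$, $\Delta$ counts rigid solutions of a parametrized Floer equation on the cylinder $\R\times S^1$, parametrized by $\theta\in S^1$. The parameter $\theta$ rotates the asymptotic marker at the negative end. Concretely, I take a family of Floer data $(H^\theta_{s,t},J^\theta_{s,t})$ that agrees with $(H_{t},J_t)$ for $s\gg0$ and with $(H_{t-\theta},J_{t-\theta})$ for $s\ll 0$. Rigid pairs $(\theta,u)$ then contribute to $\langle\Delta x_+,x_-\rangle$. Each property reduces to a feature of this moduli space.

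\textbf{(1) and (2).} Homotopy classes are preserved because each $u$ is a cylinder joining $x_+$ and $x_-$ up to reparametrization. The reparametrization $t\mapsto t-\theta$ is homotopic to the identity, so it does not change free homotopy classes. This gives (2).

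For (1), I would use the standard energy estimate
\[
0\le E(u)=\cA_H(x_+)-\cA_H(x_-)+\int \partial_s H^\theta_{s,t}(u)\,ds\,dt.
\]
I choose the family to be $s$-independent up to rotation, e.g.\ $H^\theta_{s,t}=H_{t-\beta(s)\theta}$ with $\beta$ a cutoff. The correction term is then controlled by the $t$-dependence of $H$, which is $C^2$-small. This shows that action does not decrease beyond an arbitrarily small error. The error is absorbed because $a$ is not a Reeb period, so the filtration levels are stable. One could alternatively take $H$ autonomous up to the perturbation and argue directly that rotation preserves the action. Hence $\Delta$ preserves the subcomplex of action $<a$ and descends to $SH^*_{<a}$. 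It commutes with continuation maps by the usual homotopy argument, so it passes to the limit.

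\textbf{(3).} To show $\Delta^2=0$, I would consider the two-parameter family $(\theta_1,\theta_2)\in S^1\times S^1$ built by gluing. Its boundary gives $\Delta\circ\Delta$. It is homotopic to the family obtained by rotating the marker by $\theta_1+\theta_2$. That family factors through the map $S^1\times S^1\to S^1$, $(\theta_1,\theta_2)\mapsto\theta_1+\theta_2$, which has positive-dimensional fibers. Its rigid count therefore vanishes: the degenerate family has no isolated solutions after a choice of data pulled back from $S^1$. The cobordism given by the homotopy yields the chain homotopy $\Delta^2\simeq 0$. This is the step where I expect transversality for degenerate (pulled-back) data to be the main technical obstacle. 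I would handle it as in \cite{Abo15} by arguing with the explicit $S^1$-equivariant family.

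\textbf{(4).} For $\epsilon$ small, $CF^*(H_\epsilon)$ consists only of constant orbits of a $C^2$-small Morse function. By the Floer--Morse correspondence for $C^2$-small time-independent Hamiltonians, all rigid parametrized solutions are $t$-independent. A $t$-independent solution is invariant under the rotation, so it comes in an $S^1$-family and is not rigid in the parametrized moduli space. Hence $\Delta=0$ on $H^*(W;R)$.
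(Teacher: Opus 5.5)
Your sketch is correct and is the standard chain-level argument behind the paper's citation of \cite{Abo15}; the paper itself gives no proof, and your four steps match that argument: the energy identity for the rotated family for (1), rotation being homotopic to the identity for (2), the glued $T^2$-family homotoped to one pulled back along $(\theta_1,\theta_2)\mapsto\theta_1+\theta_2$ for (3), and autonomous $\theta$-independent data on the constant orbits for (4). Two small repairs are needed: first, $H_{t-\beta(s)\theta}$ is not periodic in $\theta$ (replacing $\theta$ by $\theta+1$ changes it wherever $0<\beta(s)<1$ unless $H$ is autonomous), so use an honest $S^1$-family such as $(1-\beta(s))H_t+\beta(s)H_{t-\theta}$, whose $\partial_s$-term is still bounded by the small $t$-variation of $H$; second, in the paper's conventions, where $CF_0^*$ is a subcomplex, the identity reads $E(u)=\cA_H(x_-)-\cA_H(x_+)+(\text{error})$ with input $x_+$, so the subcomplexes $\Delta$ preserves are $\{\cA_H>-a\}$, not ``action $<a$''.
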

The BV operator has an additional compatibility with the product structure, which leads to the following BV algebra structure on symplectic cohomology.
\begin{definition}[\cite{GP21}]
    A \emph{Batalin--Vilkovisky algebra} is a graded vector space $A$ equipped with the following structure:
    \begin{itemize}
        \item A graded-commutative product: $a\cdot b=(-1)^{|a|\,|b|}b\cdot a$.
        \item A Gerstenhaber bracket: a Lie bracket $[\cdot,\cdot]$ of degree $-1$ which is graded-commutative and satisfies the Jacobi identity and the Leibniz rule.
        \item A BV operator: a linear operator $\Delta:A\to A$ of degree $-1$ satisfying $\Delta^2=0$ and $[a,b]=\Delta(a\cdot b)-(\Delta a)\cdot b- (-1)^{|a|}a\cdot (\Delta b)$.
    \end{itemize}
\end{definition}
See \cite[Chapter 10]{Abo15} for the explicit construction of the BV algebra structure on symplectic cohomology and \cite{CS99} for the BV structure on loop homology.

\subsection{Viterbo isomorphism}
In \cite{Vit99}, Viterbo proved that for an oriented closed manifold $Q$, the symplectic cohomology of $T^*Q$ is isomorphic to the homology of the free loop space $\cL Q$, using generating function theory. Later, this isomorphism was reformulated by Abbondandolo--Schwarz \cite{AS06} and Salamon--Weber \cite{SW06} using different approaches. The necessity of twisting by a local system was noticed by Seidel and worked out in detail by Kragh \cite{Kra18}. The local coefficient system is trivial if and only if the second Stiefel--Whitney class vanishes on every torus in the cotangent bundle. The reason for this phenomenon is that the orientability of the moduli spaces being counted is determined by several characteristic classes; see \cite{AS14,Abo15}. To avoid these issues, we mainly use the Viterbo isomorphism with $\Z/2$ coefficients.
\begin{theorem}[\cite{AS14, Abo15,Vit99}]\label{Vitiso}
There is a local coefficient system $\eta$ on the free loop space such that
\[
SH^*(T^*Q;\Z) \cong H_{n-*}(\mathcal{L}Q;\eta).
\]
Moreover, if $Q$ is orientable and the second Stiefel--Whitney class vanishes on tori in $T^*Q$, then $\eta$ is trivial, in which case
\[
SH^*(T^*Q;\Z) \cong H_{n-*}(\mathcal{L}Q;\Z).
\]
In all cases,
\[
SH^*(T^*Q;\Z/2) \cong H_{n-*}(\mathcal{L}Q;\Z/2).
\]
In particular, the isomorphisms above are isomorphisms of BV algebras.
\end{theorem}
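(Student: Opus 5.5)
This statement is cited from \cite{AS14, Abo15, Vit99} rather than proved here, so my plan only sketches how the cited argument goes; it does not give an independent proof.

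\textbf{Step 1: a chain map from Floer cochains to loop-space chains.} I would follow the Abbondandolo--Schwarz approach. Choose $H$ on $T^*Q$ that is quadratic (or linear with large slope) in the fibres. Consider a Floer half-cylinder $u:(-\infty,0]\times S^1\to T^*Q$ asymptotic to a Hamiltonian orbit at $-\infty$, with boundary $u(0,\cdot)$ on the zero section $Q$. Such curves give a map
\[
\Phi: CF^*(H)\to C_{n-*}(\mathcal{L}Q),
\]
landing in Morse chains of the Lagrangian action functional on $W^{1,2}$ loops, or in singular chains via evaluation at the boundary loop. The energy identity shows $\Phi$ is action-filtered. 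Compactness of the moduli spaces, together with gluing, shows $\Phi$ is a chain map.

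\textbf{Step 2: $\Phi$ is an isomorphism.} After Legendre transform, critical points on both sides match. The Hamiltonian action is at most the Lagrangian action, with equality only along the trivial correspondence, so $\Phi$ is upper triangular with invertible diagonal. Passing to the direct limit over slopes then gives an isomorphism on $SH^*$.

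\textbf{Step 3: orientations and the local system $\eta$.} Orienting these moduli spaces requires comparing the determinant lines of Cauchy--Riemann operators on half-cylinders with boundary on $Q$. The comparison is governed by the relative spin structure of $Q$ over loops: the failure is measured by transgressing $w_2(Q)$ to $\mathcal{L}Q$, and also by $w_1$ when $Q$ is non-orientable. This transgression defines $\eta$, and it is trivial exactly when $w_2$ vanishes on tori. This bookkeeping is the main obstacle and is carried out in Kragh's and Abouzaid's work. With $\Z/2$ coefficients no orientations are needed, so the statement holds directly.

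\textbf{Step 4: compatibility with the BV algebra structure.} I would compare pair-of-pants moduli spaces with boundary on $Q$ against the Chas--Sullivan loop product, defined as the fibre product over $Q$ followed by concatenation. A degeneration and cobordism argument, stretching the boundary arcs into figure-eight loops, identifies the two products up to homotopy. For the BV operators, the $S^1$-family of asymptotic markers corresponds under $\Phi$ to the rotation action $\rho$, which gives $\Phi\Delta=\Delta\Phi$. The bracket is determined by the product and $\Delta$, so its compatibility follows.
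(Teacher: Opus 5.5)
The paper does not prove this theorem. It only cites \cite{AS14,Abo15,Vit99}, with the local system $\eta$ attributed to \cite{Kra18}, so deferring to the literature matches what the paper does, and your Steps 3 and 4 are fair summaries. As an outline of the cited argument, however, your Steps 1 and 2 are mismatched, and Step 2 fails for the map you actually define. Your triangularity argument is the Abbondandolo--Schwarz one. It applies to the map in the \emph{opposite} direction, from the Morse complex of $\mathbb{E}_L$ to the Floer complex. That map counts half-cylinders $[0,\infty)\times S^1\to T^*Q$ converging to $x$ at $+\infty$. Only the projection $\pi\circ u(0,\cdot)$ is constrained to lie in $W^u(\gamma)$, and the fibre component is free. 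There one has $\mathbb{A}_H(x)\le \mathbb{A}_H(u(0,\cdot))\le \mathbb{E}_L(\pi\circ u(0,\cdot))\le \mathbb{E}_L(\gamma)$, in their sign convention (the opposite of the paper's). The three inequalities come from the energy identity, Fenchel's inequality and the unstable-manifold condition. Equality forces the stationary solution at the Legendre lift of $\gamma$, and that is what gives the invertible diagonal. Your half-cylinders $(-\infty,0]\times S^1$ have boundary on the zero section, so the boundary loop has $p\equiv 0$ and its action is $-\int_0^1 H(t,q,0)\,dt$, whatever the length of $q$. The energy identity then only bounds the action of $x$, and nothing bounds $\mathbb{E}_L(q)$ in terms of it. There is also no stationary diagonal solution, because the Legendre lift of a nonconstant critical loop has $p\neq 0$ and so does not lie on $Q$. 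Nothing in your argument makes your $\Phi$ triangular.

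One way to repair this is to use both maps. The Abbondandolo--Schwarz map $\Phi_{\mathrm{AS}}$ is an isomorphism by the argument of your Step 2. Your zero-section map $\Psi$ then satisfies $\Psi\circ\Phi_{\mathrm{AS}}\simeq\mathrm{id}$. The homotopy comes from finite cylinders $[0,R]\times S^1$ whose left end projects into $W^u(\gamma)$ and whose right end lies on $Q$. These reduce to the identity as $R\to 0$ and break into $\Psi\circ\Phi_{\mathrm{AS}}$ as $R\to\infty$, so $\Psi$ is the inverse isomorphism. It is worth keeping $\Psi$, because it is the map for which Step 4 is transparent: rotating the asymptotic marker literally rotates the boundary loop. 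Note also that the Legendre transform requires $H$ to be fibrewise strictly convex with quadratic growth, so ``linear with large slope'' cannot be used in Step 2. Identifying the Floer homology of a quadratic Hamiltonian with the direct limit over linear slopes is a separate standard step.
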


\subsection{Viterbo transfer}
Consider an inclusion map $i:V^n\hookrightarrow W^n$ of manifolds, which induces a restriction map in cohomology, $i^*:H^*(W)\to H^*(V)$. If $(V,\lambda_V=\lambda_W|_V)$ is a Liouville subdomain of the Liouville domain $(W,\lambda_W)$, there is an analogous restriction map, called the \emph{Viterbo transfer map}, $i^*:SH^*(W)\to SH^*(V)$ \cite{Vit99}, which preserves the BV algebra structure. In particular, if $L\subset W$ is an exact Lagrangian, then a Weinstein neighborhood of $L$ is an exact subdomain in $W$, yielding a BV algebra map $SH^*(W)\to SH^*(T^*L)$.\par

In general, when $L$ is not exact, Weinstein neighborhoods of $L$ are not exact subdomains, which breaks the usual Viterbo transfer. Specifically, the restriction of the Liouville form $\lambda$ to $L$ is then a closed but non-exact $1$-form. However, a deformed Viterbo transfer should still exist \cite[Remark 4.2]{Zho22}, which leads to the truncated Viterbo transfer considered in \cite[\S 3]{Zho22}. To state the truncated Viterbo transfer, we first recall \textbf{a modified version of the minimal symplectic area} of $L$:
$$A'_{\min}(L,W):=\inf \left\{\langle \gamma,\, \lambda|_L \rangle\,\left|\, \gamma\in H_1(L),\ \langle \gamma,\lambda|_L\rangle>0 \right.\right\}.$$
The difference with \eqref{def:minimalarea} is that, by Stokes' theorem, \eqref{def:minimalarea} only uses loops $\gamma$ that are contractible in $W$. This quantity controls the minimal energy level at which the deformation can be observed, and serves as the threshold for the truncated Viterbo transfer as follows.

\begin{proposition}[{\cite[Proposition 3.1]{Zho22}}]\label{prop:truncate}
Let $(W,\lambda)$ and $L$ be as above, and assume that $A'_{\min}(L,W) > 2A > 0$. Then there is a Viterbo transfer map $SH^*_{<A}(W) \to SH^*(T^*L)$ such that the following diagram of long exact sequences commutes:
\[
\begin{tikzcd}[column sep=small]
\dots \arrow[r] & H^*(W) \arrow[r] \arrow[d] & SH^{*}_{<A}(W) \arrow[r] \arrow[d] & SH^{*}_{+,<A}(W) \arrow[r] \arrow[d] & H^{*+1}(W) \arrow[r] \arrow[d] & \dots \\
\dots \arrow[r] & H^*(T^*L) \arrow[r] & SH^*(T^*L) \arrow[r] & SH^*_+(T^*L) \arrow[r] & H^{*+1}(T^*L) \arrow[r] & \dots
\end{tikzcd}
\]
This Viterbo transfer map is compatible with the BV operator, and compatible with the product
$$\psi_P: SH_{<A}^*(W)\otimes SH_{<A}^*(W)\to SH_{<2A}^*(W)$$
whenever $A'_{\min}(L,W)>4A$.
\end{proposition}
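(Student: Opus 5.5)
The plan is to imitate Viterbo's original construction with a two-step Hamiltonian, and to use the action filtration to throw away exactly the orbits that see the non-exactness of $\lambda|_L$.

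\textbf{Step 1 (local model).} Choose a Weinstein neighborhood $U\cong D_\epsilon T^*L$ of $L$. Then $\lambda|_U-\lambda_{\mathrm{can}}$ is closed, and its class is $\pi^*[\lambda|_L]$. Hence we can write
\[
\lambda|_U=\lambda_{\mathrm{can}}+\pi^*\theta+dh,
\]
with $\theta$ a closed $1$-form on $L$ representing $[\lambda|_L]$. For any loop $\gamma$ in $U$, the $\lambda$-action and the $\lambda_{\mathrm{can}}$-action of $\gamma$ therefore differ by exactly $\langle[\pi\gamma],\theta\rangle$. This period is either $0$ or has absolute value at least $A'_{\min}(L,W)>2A$.

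\textbf{Step 2 (step Hamiltonian).} Take a Hamiltonian $G$ on $\hW$ with the following shape. It equals a large negative constant $-C$ on the interior of $U$. It rises with a large slope $b$ across the collar of $\partial U$ and is constant on $W\setminus U$. It then has slope $a$, with $A<a$ chosen generically, near $\partial W$. The orbits fall into three groups: those near $\partial U$ (cotangent geodesics), constants, and orbits near $\partial W$. The constants are chosen so that the orbits near $\partial W$ and on $W\setminus U$ have action $\ge A$, or are separated from the $U$-orbits by the usual Viterbo energy gap. The $U$-orbits whose $\theta$-period is zero have actions equal to their canonical actions shifted by $-C$. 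The $U$-orbits whose $\theta$-period is nonzero are shifted by more than $2A$ in one direction or the other.

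Working in the action window $(-C-A,\,-C+A)$, made suitably precise, we then do the following. The orbits shifted upward are excluded by the truncation, since Floer differentials decrease action. The orbits shifted downward form a subcomplex, and we quotient by it; this is where the hypothesis $A'_{\min}>2A$ is used. The map is defined as the composite of three pieces:
\begin{itemize}
\item the continuation $SH^*_{<A}(W)=HF^*(H_A)\to HF^*_{\text{window}}(G)$, which is monotone because $G\ge H_A$ after shifting;
\item projection onto the window complex generated by $U$-orbits with zero $\theta$-period;
\item identification of that complex with the Floer complex of a linear-at-infinity Hamiltonian of slope $b$ on $(T^*L,\lambda_{\mathrm{can}})$.
\end{itemize}
For the last identification, the key point is that on loops with zero $\theta$-period the form $\lambda$ is exact relative to $\lambda_{\mathrm{can}}$, so actions agree up to a constant. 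The standard no-escape/maximum principle on $\partial U$ shows that the relevant Floer cylinders stay inside $U$, or are counted as in $T^*L$. Letting $b\to\infty$ yields a map $SH^*_{<A}(W)\to SH^*(T^*L)$. Running the same construction for $CF_0$ and $CF_+$ gives the commutative ladder of long exact sequences: constants map to constants, and the filtration respects the splitting.

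\textbf{Step 3 (compatibility).} Compatibility with $\Delta$ follows because the $S^1$-parametrized Floer cylinders do not decrease action (Proposition~\ref{prop:BV}) and preserve free homotopy classes. The same window argument therefore applies verbatim, and the chain-level homotopies commute with the projection. For the product, the output of $\psi_P$ lands in $SH^*_{<2A}(W)$. To run the same window argument on the output level $2A$, together with the homotopy between "multiply then transfer" and "transfer then multiply", we need the $\theta$-periods to exceed twice that, namely $A'_{\min}>4A$. This explains the stronger hypothesis.

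The main obstacle is Step 2. We must check that the orbits with nonzero $\theta$-period genuinely split off as a subcomplex or quotient compatible with the continuation maps. We must also check that the energy and action estimates for pair-of-pants and continuation solutions crossing $\partial U$ hold, even though $\lambda$ is not exact on $U$. I would attack this by computing all actions with respect to $\lambda$ on $W$, using the explicit shift $\langle\gamma,\theta\rangle$, and relying only on topological energy identities.
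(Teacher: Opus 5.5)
Your architecture is the right one: a step Hamiltonian around a Weinstein neighbourhood $U\cong D^*_\epsilon L$, the shift of $\lambda$-actions by $\langle[\pi\gamma],\theta\rangle$, and an action window. The paper simply defers to the proof of \cite[Proposition 3.1]{Zho22}. Your use of all of $H_1(L)$, rather than only loops contractible in $W$, is exactly the modification that makes $A'_{\min}$ replace $A_{\min}$ there.

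The gap is in Step 2. You let the slope $b$ tend to infinity while asserting that orbits of nonzero $\theta$-period leave the window $(-C-A,-C+A)$. But the shift is measured relative to the canonical action. The orbits where $G$ starts to rise (at radius $\epsilon$) have canonical actions $\approx -C-\epsilon\ell$ for geodesic lengths $\ell<b$, and these fill out $(-C-\epsilon b,-C]$. Your count ``window of width $2A$ versus shift $>2A$'' therefore only controls orbits whose canonical action already lies in the window. An orbit with upward shift $s\ge A'_{\min}$ and $|\epsilon\ell-s|<A$ lands back inside, and for large $b$ such orbits do occur. Already for a flat $T^2$ with $\theta=\kappa\,dx$ and $\kappa>\epsilon$, the classes $(m,n)$ with $m$ of the appropriate sign and $\bigl|\epsilon\sqrt{m^2+n^2}-\kappa|m|\bigr|<A$ form an infinite set. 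So ``excluded by the truncation'' is false, the window complex contains generators in classes of nonzero period, and nothing in your argument makes discarding them a chain map. Relatedly, your account of the factor $2$ is off: downward-shifted lower-corner orbits have action below $-C-A'_{\min}$, which is outside the window as soon as $A'_{\min}>A$.

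The repair, and the real role of $A'_{\min}>2A$, is to keep the slope finite with $A<\epsilon b<A'_{\min}-A$.
\begin{enumerate}
\item The upper bound pushes every upward-shifted orbit above $-C-\epsilon b+A'_{\min}>-C+A$.
\item The lower bound puts the intercept $-C-\epsilon b$ of the linear part of the step below the window. The usual Viterbo barrier then separates the top-of-step orbits: a Floer cylinder between an orbit outside a level set of the linear region and a zero-period orbit inside it, whose outer end has the smaller action, forces that action to be at most $-C-\epsilon b$.
\end{enumerate}
The zero-period orbits inside then form a quotient of the window complex, and this quotient is an action-truncated Floer complex of $(D^*_\epsilon L,\lambda_{\mathrm{can}})$ on the zero-period classes. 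You reach $SH^*(T^*L)$ by composing with the continuation map there, which exists because $\lambda_{\mathrm{can}}$ is exact, rather than by taking $b\to\infty$ on the $W$ side. For the product, the output window at level $2A$ forces $2A<\epsilon b<A'_{\min}-2A$, which is where $A'_{\min}>4A$ comes from, as you guessed.

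Finally, the ``standard no-escape/maximum principle on $\partial U$'' does not apply as stated, because $\lambda$ is not of contact type near $\partial U$. What does work is this: in the integrated maximum principle, trading $\lambda$ for $\lambda_{\mathrm{can}}$ on the level loops $\partial S$ costs exactly $\langle[\partial S],\theta\rangle$. In $H_1(U)$, $[\partial S]$ is $\pm$ the difference of the classes of the two ends (for the barrier, $\pm$ the class of the inner end), so the cost vanishes when those ends have zero period. The same observation is what justifies the barrier above.
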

The difference with \cite[Proposition 3.1]{Zho22} is that here we consider $SH^*(W)$ generated by \emph{all} orbits, rather than only contractible orbits as in \cite[Proposition 3.1]{Zho22}. Accordingly, we need the refined bound using $A'_{\min}(L,W)$ instead of $A_{\min}(L,W)$. The proof is identical to that of \cite[Proposition 3.1]{Zho22}.

\section{Loop homology}\label{s3}

To derive the contradiction outlined in Section~\ref{s1}, we now discuss the string topology of $T^3$ and of closed surfaces. We first introduce the following specific classes.
\begin{definition}[\cite{GP21,Li19,Sei13,SS12,Zho22'}]
    Let $(A,\Delta)$ be a (graded) BV algebra over a ring $R$, and let $e_0,e_1 \in A$ be elements satisfying $\Delta(e_1)=e_0$. We say that $(e_0,e_1)$ is
\begin{itemize}
    \item an \emph{$R$-dilation} if $e_0=1\in A$;
    \item an \emph{$R$-quasi-dilation} if $e_0$ is invertible in $A$.
\end{itemize}
\end{definition}
\begin{remark}
    Note that an $R$-dilation in $SH^*(V;R)$ must be represented by a non-constant contractible orbit, by Proposition~\ref{prop:BV}.
\end{remark}
\begin{proposition}\label{prop:dilation}(Quasi-)dilations (with the same $R$) are preserved under the following constructions:
    \begin{enumerate}
        \item If $(V,\lambda_V)$ is a Liouville subdomain of $(W,\lambda_W)$ and $SH^*(W)$ carries a (quasi-)dilation, then so does $SH^*(V)$.
        \item If one of the Liouville domains $V$ and $W$ admits a (quasi-)dilation, then so does $V\times W$.
    \end{enumerate}
\end{proposition}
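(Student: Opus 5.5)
For part (1), I would use that the Viterbo transfer map $i^*:SH^*(W;R)\to SH^*(V;R)$ for a Liouville subdomain $V\subset W$ is a unital homomorphism of BV algebras, as recalled in the subsection on the Viterbo transfer. Given a (quasi-)dilation $(e_0,e_1)$ in $SH^*(W;R)$ with $\Delta e_1=e_0$, set $e_i'=i^*e_i$. Compatibility with $\Delta$ gives $\Delta e_1'=i^*\Delta e_1=e_0'$. In the dilation case, unitality gives $e_0'=i^*(1)=1$. In the quasi-dilation case, if $e_0\cdot f=1$, then multiplicativity and unitality give $e_0'\cdot i^*f=i^*(e_0 f)=i^*1=1$, so $e_0'$ is invertible. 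The only point needing care is unitality of the transfer. I would justify it through the commutative square with $H^*(W)\to H^*(V)$: the unit of $SH^*$ is the image of $1\in H^*$ under the ring map $H^*\to SH^*$, and restriction on $H^*$ is unital.

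For part (2), I would use the Künneth formula for symplectic cohomology of products. Over a field, or under the flatness hypotheses implicit in the paper's conventions, $SH^*(V\times W)\cong SH^*(V)\otimes SH^*(W)$. A more economical route only needs a unital BV algebra homomorphism $SH^*(V)\otimes SH^*(W)\to SH^*(V\times W)$, which would come from counting split Floer data for the product Hamiltonian $H_V\oplus H_W$ (made admissible after the usual cofinality argument). Under this map the product is the tensor product of products, and $\Delta$ acts as a graded derivation: $\Delta(a\otimes b)=\Delta a\otimes b+(-1)^{|a|}a\otimes\Delta b$. This holds because the rotating marker on the product cylinder splits as a diagonal rotation on each factor.

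Suppose $(e_0,e_1)$ is a (quasi-)dilation in $SH^*(V)$. Take $(e_0\otimes 1,\ e_1\otimes 1)$. Since $\Delta 1=0$ by Proposition~\ref{prop:BV}(4), we get $\Delta(e_1\otimes 1)=e_0\otimes 1$. If $e_0=1$, then $e_0\otimes1=1\otimes1$ is the unit. If $e_0f=1$, then $(e_0\otimes 1)(f\otimes 1)=1\otimes 1$. Pushing forward by the unital BV map gives the (quasi-)dilation in $SH^*(V\times W)$. The case where $W$ carries the (quasi-)dilation is symmetric, up to the Koszul sign.

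I expect the main obstacle to be the Künneth step, specifically the claim that the product map intertwines the BV operators in this derivation form and is unital. The split Hamiltonian is not of the standard admissible shape, so one must compare it with admissible Hamiltonians on $V\times W$ through continuation maps and check that these respect $\Delta$ and the unit. I would cite Oancea's Künneth theorem together with its BV refinement (as used in \cite{Zho22'} or \cite{SS12}) rather than reprove it.
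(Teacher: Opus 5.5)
Your proposal is correct and follows the same route as the paper: part (1) uses that the Viterbo transfer is a (unital) BV algebra map, and part (2) applies Oancea's K\"unneth formula to the pair $(e_0\otimes 1,\, e_1\otimes 1)$. Your additional checks — unitality via the square with $H^*(W)\to H^*(V)$, $\Delta 1=0$, and the derivation form of $\Delta$ under the cross product — only make explicit what the paper's two-line proof leaves unstated.
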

\begin{proof}
    The first claim follows from the fact that the Viterbo transfer is compatible with the BV algebra structure. The second follows from the K\"unneth formula for symplectic cohomology established in \cite{Oan06}: if $SH^*(V)$ admits a quasi-dilation $x$, then $x\otimes 1$ is a quasi-dilation of $SH^*(V\times W)$.
\end{proof}
Now we consider several examples coming from string topology. Denote $\H^*(Q;R):=H_{n-*}(\cL Q;R)$, where $n=\dim Q$.
\begin{example}[\cite{CJY03,MG06}]\label{BVS1} 
    \begin{equation}\label{S1}
        \H^*(S^1;R)\cong R[t,t^{-1}]\otimes \wedge[a],
    \end{equation}
    where $t$ and $a$ are formal variables with $\deg t=0$ and $\deg a=1$, and $\wedge[a]$ is the exterior algebra over $R$ generated by $a$. Moreover, the BV operator is given by
    \begin{equation}
            \Delta(t^i\otimes a)= it^i\otimes 1,\qquad \Delta(t^i\otimes 1)=0,
    \end{equation}
    and the loop product is given by
    \begin{equation}
            (t^i\otimes a)\circ (t^j\otimes a) =0,\quad (t^i\otimes a)\circ (t^j\otimes 1) = t^{i+j}\otimes a, \quad (t^i\otimes 1)\circ(t^j\otimes 1)=t^{i+j}\otimes 1.
    \end{equation}
    In other words, $\H^*(S^1)$ admits an $R$-quasi-dilation for any $R$.
\end{example}
\begin{remark}\label{BVT2}
    The previous example implies that $\H^*(T^n;R)\cong \H^*(S^1;R)^{\otimes n}$ admits quasi-dilations but no dilations. More generally, on a manifold with non-positive curvature, the only contractible geodesics are the constant ones \cite{Kli78,Kli82}. In particular, there is no dilation on the loop homology. Hence, the Klein bottle and the hyperbolic surfaces do not admit dilations over any $R$.
\end{remark}
\begin{example}[\cite{MG06,SS12}]\label{BVS2}
    $\H^*(S^n)$ admits $R$-dilations for any $R$ if $n\geq 3$, and $\H^*(S^2)$ admits $R$-dilations if and only if $2$ is invertible in $R$. 
\end{example}
\begin{example}\label{BVRP2}
    $SH^*(T^*\R P^2;\Q)$ admits a dilation. This can be computed explicitly using \cite[Theorem 5.1]{Zho22}, since $T^*\R P^2$ is the complement of a degree $2$ curve in $\C\mathbb{P}^2$, or derived from \cite[Example 6.4]{SS12}. Moreover, the computation in \cite[Theorem 5.1]{Zho22} shows that $SH^*(T^*\R P^2;\Z/2)=\H^*(\R P^2;\Z/2)$ does not admit a dilation.
\end{example}

For a compact manifold $M$ of negative sectional curvature, $M$ is the classifying space of $\pi_1(M)$ by the Cartan--Hadamard theorem. The loop homology of the classifying space can then be computed algebraically.
\begin{theorem}[{\cite[Ch.~7]{Lod98}}]\label{HH=HL}
    For any discrete group $G$ and any commutative ring $R$ with unit, there is a canonical isomorphism
\[
\mathrm{HH}_*(R[G]) \cong H_*(\cL BG;R).
\]
\end{theorem}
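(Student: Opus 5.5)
The plan is to compute both sides as a direct sum over conjugacy classes of $G$ and to match the summands. An alternative route goes through the cyclic bar construction; it comes at the end.

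\emph{Algebraic side.} We compute $\mathrm{HH}_*(R[G])$ with the Hochschild complex $C_n=R[G]^{\otimes(n+1)}=R[G^{n+1}]$ and the usual Hochschild differential. Each generator $(g_0,\dots,g_n)$ carries the conjugacy class of the product $g_0g_1\cdots g_n$. Every face map preserves this class: the inner faces multiply adjacent entries, and the last face replaces $(g_0,\dots,g_n)$ by $(g_ng_0,g_1,\dots,g_{n-1})$, whose product is a conjugate. Hence the complex splits as $\bigoplus_{[g]}C_*^{[g]}$ over $\mathrm{Conj}(G)$.

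Next we identify $C_*^{[g]}$ with the bar complex computing $H_*(C_G(g);R)$. Recall that $\mathrm{HH}_*(R[G])\cong H_*(G;R[G]^{\mathrm{ad}})$, where $R[G]^{\mathrm{ad}}$ is $R[G]$ with the conjugation action. This holds because $R[G]\otimes R[G]^{op}\cong R[G\times G]$ and the bimodule $R[G]$ is induced from the diagonal subgroup. The module $R[G]^{\mathrm{ad}}$ splits as $\bigoplus_{[g]}R[G/C_G(g)]=\bigoplus_{[g]}\mathrm{Ind}_{C_G(g)}^G R$. Shapiro's lemma then gives
\[
\mathrm{HH}_*(R[G])\cong\bigoplus_{[g]\in\mathrm{Conj}(G)}H_*(C_G(g);R).
\]

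\emph{Topological side.} Components of $\cL BG$ correspond to free homotopy classes of loops, that is, to $\mathrm{Conj}(G)$. For the component $\cL_{[g]}BG$ we use the evaluation fibration $\Omega BG\to\cL BG\to BG$. Equivalently, $\cL BG\simeq EG\times_G G^{\mathrm{ad}}$, so that
\[
\cL_{[g]}BG\simeq EG\times_G (G/C_G(g))\simeq BC_G(g).
\]
Thus $H_*(\cL BG;R)\cong\bigoplus_{[g]}H_*(C_G(g);R)$, which matches the algebraic side summand by summand.

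\emph{Main obstacle.} The difficult step is making the identification $\cL BG\simeq EG\times_G G^{\mathrm{ad}}$ rigorous and canonical, so that the isomorphism does not depend on choices. I would do this through the cyclic bar construction. The simplicial set $N^{cy}_n(G)=G^{n+1}$ has the Hochschild face maps, and its normalized chains are exactly the Hochschild complex. Its geometric realization is canonically homotopy equivalent to $\cL BG$. One proves this by writing down the map $|N^{cy}G|\to\cL|NG|$ simplex by simplex, checking that it is a map over $BG$ (via $N^{cy}G\to NG$), and comparing the fibers. The fiber on the cyclic side is the discrete set $G$, and the fiber on the loop side is $\Omega BG\simeq G$; the map between them is a bijection on $\pi_0$, so the five lemma finishes the argument. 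This canonical version also explains why the isomorphism intertwines Connes' operator with the BV operator of \eqref{S1act}. The statement only requires the additive isomorphism, however.
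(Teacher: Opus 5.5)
Your proposal is correct. The paper gives no proof of its own here: it cites Loday, whose argument is essentially the cyclic-bar route you put at the end. There, the Hochschild complex of $R[G]$ is literally the chain complex of the cyclic nerve $N^{cy}G$. The projection $N^{cy}G\to NG$ is the simplicial Borel construction $EG\times_G G^{\mathrm{ad}}$, hence a covering with fiber $G$. Comparing it over $BG$ with the evaluation fibration $\Omega BG\to\cL BG\to BG$ gives a natural, $S^1$-equivariant equivalence $|N^{cy}G|\simeq\cL BG$, and the homology isomorphism follows at once.

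Your main route goes the other way.
\begin{itemize}
\item On the algebraic side you prove Burghelea's decomposition (Theorem~\ref{decom}), using $\mathrm{HH}_*(R[G])\cong H_*(G;R[G]^{\mathrm{ad}})$ and Shapiro's lemma.
\item On the topological side you split $\cL BG$ into components $\cL_{[g]}BG\simeq BC_G(g)$.
\item You then match the summands.
\end{itemize}
This is more elementary. It also yields in one stroke both results the paper cites separately, and their combination is exactly what Proposition~\ref{prop:negaCurv} uses.

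Its drawbacks are twofold. The isomorphism is only defined after choosing representatives of conjugacy classes; this is harmless, since conjugation by elements of $C_G(g)$ acts trivially on $H_*(C_G(g);R)$. More importantly, it does not see the cyclic structure, so compatibility with Connes' operator and the BV operator is invisible from it.

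Note also that your topological half does not need $\cL BG\simeq EG\times_G G^{\mathrm{ad}}$, which is really the content of the cyclic-bar comparison. The long exact sequence of the evaluation fibration already shows that each component is aspherical with $\pi_1(\cL_{[g]}BG)\cong C_G(g)$, and that suffices for the abstract match.

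A minor correction: it is the unnormalized chains of $N^{cy}G$ that coincide with the Hochschild complex. The normalized chains give the normalized Hochschild complex.
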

\begin{theorem}[\cite{Bur85}]\label{decom}
For any discrete group $G$, there is a canonical isomorphism
\[
\mathrm{HH}_*(R[G]) \cong \bigoplus_{\langle z \rangle \in \langle G \rangle} H_*(BG_z;R),
\]
where $G_z$ is the centralizer of $z$, $\langle G \rangle$ is the set of conjugacy classes of $G$, and $\langle z \rangle$ denotes the conjugacy class of $z$.
\end{theorem}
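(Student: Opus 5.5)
The plan is to prove the decomposition in \Cref{decom} by first rewriting Hochschild homology of $R[G]$ as group homology with coefficients in the adjoint representation, and then splitting that representation along conjugacy classes and applying Shapiro's lemma.

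\emph{Step 1: Hochschild homology as group homology.} Recall that $\mathrm{HH}_*(R[G])=\mathrm{Tor}^{R[G]^e}_*(R[G],R[G])$, where $R[G]^e=R[G]\otimes R[G]^{op}$. The map $g\mapsto g^{-1}$ identifies $R[G]^{op}\cong R[G]$, so $R[G]^e\cong R[G\times G]$. Under this identification, the bimodule $R[G]$ becomes the $G\times G$-module with $(g,h)\cdot x=gxh^{-1}$. This action is transitive on the basis $G$, and the stabilizer of $1$ is the diagonal $\Delta G$. Hence
\[
R[G]\cong R[G\times G]\otimes_{R[\Delta G]} R ,
\]
which is free over $R$, and induced from the trivial module. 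By Shapiro's lemma,
\[
\mathrm{Tor}^{R[G\times G]}_*\big(R[G],M\big)\cong \mathrm{Tor}^{R[\Delta G]}_*(R,M|_{\Delta G})=H_*(G;M|_{\Delta G}).
\]
Apply this with $M=R[G]$, viewed as a $G\times G$-module in the same way. Its restriction to the diagonal is $R[G]^{\mathrm{ad}}$, namely $R[G]$ with $G$ acting by conjugation $g\cdot x=gxg^{-1}$. This gives
\[
\mathrm{HH}_*(R[G])\cong H_*(G;R[G]^{\mathrm{ad}}).
\]
One point needs care here: Tor over $R[G]^e$ is balanced in its two bimodule arguments only after turning one of the bimodules into a right module. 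I will fix the conventions once by comparing with the explicit cyclic bar complex $C_n=R[G^{n+1}]$.

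\emph{Step 2: splitting the adjoint module.} The conjugation action permutes the basis $G$, and its orbits are exactly the conjugacy classes. Therefore
\[
R[G]^{\mathrm{ad}}=\bigoplus_{\langle z\rangle\in\langle G\rangle} R[\langle z\rangle],
\]
and $R[\langle z\rangle]\cong R[G/G_z]=R[G]\otimes_{R[G_z]}R$ via $gG_z\mapsto gzg^{-1}$. Group homology commutes with arbitrary direct sums of coefficient modules. Applying Shapiro's lemma once more gives
\[
H_*(G;R[G/G_z])\cong H_*(G_z;R)=H_*(BG_z;R).
\]
Summing over conjugacy classes yields the claimed isomorphism.

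\emph{Step 3: sanity check and canonicity.} At chain level the same splitting is visible directly in the cyclic bar complex. For a basis element $(g_0,\dots,g_n)$, the inner faces multiply adjacent entries. The last face sends it to $(g_ng_0,g_1,\dots,g_{n-1})$, whose total product is conjugate to $g_0\cdots g_n$. So the conjugacy class of the total product is preserved, and the complex splits as a direct sum over $\langle G\rangle$. Each summand is quasi-isomorphic to the bar complex of $G_z$ with trivial coefficients. Both Shapiro isomorphisms are natural and independent of choices, since they are induced by the inclusions $\Delta G\subset G\times G$ and $G_z\subset G$. The only choice is a representative $z$ of each class, and different representatives give isomorphisms that differ by conjugation, which acts trivially on homology.

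I expect the main obstacle to be Step 1: getting the bimodule-to-$G\times G$-module dictionary and the left/right conventions exactly right, so that the diagonal restriction is really the conjugation action and the resulting isomorphism is canonical. The remaining steps are formal.
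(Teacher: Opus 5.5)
Your argument is correct. The paper gives no proof of this statement; it is quoted from \cite{Bur85}. What you wrote is the standard proof of that result, essentially the one in \cite[Ch.~7]{Lod98}. First, $\mathrm{HH}_*(R[G])\cong H_*(G;R[G]^{\mathrm{ad}})$ by Shapiro's lemma for $\Delta G\subset G\times G$. Then the adjoint module splits into the permutation modules $R[G/G_z]$, and Shapiro's lemma applies again.

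\textbf{Conventions in Step~1.} The left/right issue you flag can be bypassed entirely. Give $R[G]$ the right action $m\cdot g=g^{-1}mg$. Then the map
\[
(g_0,\dots,g_n)\longmapsto (g_1\cdots g_n g_0)\otimes[g_1|\cdots|g_n]
\]
is an isomorphism of chain complexes from the Hochschild complex onto the standard complex computing $H_*(G;R[G]^{\mathrm{ad}})$; one checks the faces directly. Since $g_1\cdots g_ng_0$ is conjugate to $g_0\cdots g_n$, this map carries your Step~3 splitting onto the Step~2 splitting. Combined with Shapiro's lemma, it also justifies the quasi-isomorphism that you only assert in Step~3.

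\textbf{Canonicity.} Your phrase ``conjugation acts trivially on homology'' should be made precise as follows. For $z'=kzk^{-1}$, the map $(c_k)_*\colon H_*(G_z;R)\to H_*(G_{z'};R)$ does not depend on the choice of $k$. Two such choices differ by an element of $G_z$, and inner automorphisms of $G_z$ act trivially on its group homology.

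\textbf{Why the chain-level picture is useful here.} Connes' operator is built from cyclic permutations and insertion of $1$. Both operations preserve the conjugacy class of $g_0\cdots g_n$, so the operator respects the decomposition. This is the algebraic counterpart of the fact used in Proposition~\ref{prop:negaCurv} that $\Delta$ preserves the homotopy class of loops.
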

Thus, the loop homology of the classifying space is determined by the conjugacy classes of the fundamental group. We will use the Preissmann theorem \cite[Ch.~12]{Car92}.
\begin{theorem}[Preissmann]
    If $M$ is a compact Riemannian manifold of negative sectional curvature, then any non-trivial abelian subgroup of $\pi_1(M)$ is isomorphic to $\Z$.
\end{theorem}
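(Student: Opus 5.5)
The plan is the classical argument through the geometry of the universal cover. Let $\widetilde M$ be the Riemannian universal cover of $M$. By the Cartan--Hadamard theorem, $\widetilde M$ is diffeomorphic to $\R^n$ and is uniquely geodesic, and $\pi_1(M)$ acts on it freely and properly discontinuously by isometries (deck transformations). Compactness of $M$ supplies two facts. First, sectional curvature satisfies $K\le -\kappa<0$ for some constant $\kappa>0$. Second, the injectivity radius of $M$ is positive, so every nontrivial $g\in\pi_1(M)$ has displacement $d(x,gx)\ge 2\,\mathrm{inj}(M)>0$ for all $x$.

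\emph{Step 1 (axes).} Fix $g\ne 1$ and consider the displacement function $d_g(x)=d(x,gx)$. It descends to a function on a compact quotient: it is defined on $\widetilde M/\langle g\rangle$ and, on a fundamental domain, is bounded below by its values on a compact set. Alternatively, one can note that the free homotopy class of $g$ in compact $M$ contains a closed geodesic of minimal length. Either way, the infimum of $d_g$ is attained and is positive. A standard convexity argument then shows that a minimizing point $x$ lies on a geodesic line $c_g$ through $x$ and $gx$, which $g$ translates by $\ell(g)=\min d_g$; this $c_g$ is an \emph{axis} of $g$.

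\emph{Step 2 (uniqueness of the axis).} This is the key step. Suppose $c,c'$ are two axes of $g$. Then $t\mapsto d(c(t),c'(t))$ is convex, because the distance function is convex on a Hadamard manifold, and it is periodic, because $g$ translates both lines by the same amount. Hence it is constant. The flat strip theorem then says that $c$ and $c'$ bound a totally geodesic flat strip, which is impossible when $K<0$ unless $c=c'$. If I prefer to avoid citing the flat strip theorem, I will argue directly with Jacobi fields: the variation through geodesics joining $c(t)$ to $c'(t)$ has constant length, so the second variation forces a Jacobi field of constant norm, contradicting $K<0$ by the Rauch/Jacobi comparison.

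\emph{Step 3 (abelian subgroups preserve an axis).} Let $H\subset\pi_1(M)$ be a nontrivial abelian subgroup and pick $1\ne a\in H$. For $b\in H$, the line $b(c_a)$ is an axis of $bab^{-1}=a$, so uniqueness gives $b(c_a)=c_a$. Thus $H$ acts by isometries on the line $c_a\cong\R$. An element reversing the orientation of the line would fix a point of it, contradicting freeness of the deck action. So $H$ acts by translations, which defines a homomorphism $H\to\R$. This homomorphism is injective, since an element acting trivially on $c_a$ fixes points and so is the identity. Its image is discrete, since all nonzero translation lengths are at least $2\,\mathrm{inj}(M)$. A nontrivial discrete subgroup of $\R$ is infinite cyclic, so $H\cong\Z$.

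I expect the only genuinely delicate point to be Step 2, the uniqueness of axes under strictly negative curvature; everything else is formal once it is in place. I will also check that the existence argument in Step 1 uses compactness correctly, namely that the infimum of the displacement is realized.
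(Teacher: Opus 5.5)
Your proof is correct and is essentially the classical argument in do Carmo, Ch.~12, which the paper cites without reproducing. That argument has four steps: an axis for each nontrivial deck transformation, obtained from a minimal closed geodesic in its free homotopy class; uniqueness of that axis when $K<0$; commuting elements therefore sharing it; and the translation group being a discrete subgroup of $\R$. The only difference is that do Carmo proves uniqueness of the axis via angle sums of geodesic triangles (a quadrilateral with vertices $p,\alpha(p),\alpha(q),q$ on two distinct axes would have angle sum $2\pi$), whereas you use convexity of $t\mapsto d(c(t),c'(t))$ and the flat strip theorem.

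One small repair: discard the first justification in Step 1. The quotient $\widetilde M/\langle g\rangle$ is not compact, and $d_g$ is not invariant under the whole deck group, since $d_g\circ h=d_{h^{-1}gh}$. Rely instead on the closed-geodesic argument, or move a minimizing sequence into a compact fundamental domain and use proper discontinuity to see that only finitely many conjugates of $g$ occur.
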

\begin{proposition}\label{prop:negaCurv}
    The loop homology of a closed manifold $M$ of dimension $n\geq 2$ and negative sectional curvature is
    \[
    \H^*(M)\cong H_{n-*}(M)\oplus \bigoplus_{\langle\gamma\rangle\neq 1}H_{n-*}(S^1),
    \] 
    where $\langle\gamma\rangle$ ranges over the non-trivial conjugacy classes of $\pi_1(M)$. Moreover, $\H^*(M)$ does not admit a quasi-dilation.
\end{proposition}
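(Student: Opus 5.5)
The decomposition follows from \Cref{HH=HL} and \Cref{decom}, applied to $G=\pi_1(M)$. By the Cartan--Hadamard theorem $M\simeq BG$, so $\cL M\simeq \cL BG$ and
\[
H_*(\cL M;R)\cong \bigoplus_{\langle z\rangle} H_*(BG_z;R).
\]
Geometrically, this is the splitting of $\cL M$ into components indexed by free homotopy classes, with the component of $z$ a $K(G_z,1)$. For $z=1$ we get $G_z=G$ and the summand $H_*(M)$; this is the constant-loop component, since $\cL_0 M\simeq M$ for an aspherical $M$. For $z\neq 1$, the centralizer $G_z$ is torsion-free because $M$ is aspherical and of finite dimension. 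I then need $G_z\cong\Z$. Take any $g\in G_z$; the group $\langle z,g\rangle$ is abelian and nontrivial, so by Preissmann's theorem it is cyclic. Hence every finitely generated subgroup of $G_z$ that contains $z$ is infinite cyclic, so $G_z$ is a torsion-free, locally cyclic group of rank one. To conclude $G_z\cong\Z$ I would use the geometric description: $G_z$ acts by isometries preserving the unique axis of the hyperbolic isometry $z$ (uniqueness by negative curvature), and it acts on that axis by translations, discretely and freely. So $G_z$ embeds as a discrete subgroup of $\R$, which gives $G_z\cong\Z$. Then $BG_z\simeq S^1$, and shifting degrees by $n$ gives the stated formula.

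For the non-existence of quasi-dilations, suppose $\Delta(e_1)=e_0$ with $e_0$ invertible. I use the grading by conjugacy classes, which both the loop product and the BV operator respect in a compatible way: $\Delta$ preserves each summand, by Proposition~\ref{prop:BV}(2), and the product sends the classes $(\alpha,\beta)$ into the class of $\alpha\beta$ whenever that is defined. The first step is to show that $e_0$ must have a nonzero component in the trivial class. An invertible element $e_0$ satisfies $e_0\cdot f=1$, and $1$ lies in the constant summand. In the integral group-ring picture one argues with lengths or filtration: the product of components in nontrivial classes never lands in the trivial class with nonzero contribution except through the pairing of $\gamma$ with $\gamma^{-1}$. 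This step is delicate, so I would handle it differently.

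A cleaner route is to compute $\Delta$ summand by summand. On the constant component, $\Delta=0$, because the $S^1$-action on constant loops is trivial. On a component indexed by $z$, which is homotopy equivalent to $S^1$ with the $S^1$ rotating the geodesic, $\Delta$ sends the class of a point to $k[S^1]$, where $k$ is the multiplicity when $z=z_0^k$ with $z_0$ primitive. The image of $\Delta$ therefore lies in $\bigoplus_{z\neq1}H_1(S^1)$, that is, in degree $n-1$ of $\H^*$ and supported off the trivial class. So $e_0$ has zero component in the trivial class and is homogeneous of degree $n-1$.

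It remains to show that such an element cannot be invertible. Since $n\geq 2$, the element $e_0$ has nonzero degree, while the unit $1$ has degree $0$. Grading then forces $f$ to have degree $-(n-1)$ (after the shift the unit sits in degree $0$). The loop homology $\H^*$ is concentrated in degrees $0,\dots,n$, so no class of negative degree $1-n<0$ exists. This contradiction shows that $\H^*(M)$ has no quasi-dilation, and the degree argument avoids the conjugacy-class analysis altogether. The main obstacle is therefore the identification $G_z\cong\Z$, which I handle through axes in the universal cover.
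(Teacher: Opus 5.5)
Your proof is correct and follows the paper's route: the Loday--Burghelea decomposition with Preissmann's theorem to get $G_z\cong\Z$, then a degree argument resting on $\Delta$ vanishing on the contractible component and preserving free homotopy classes. Your formulation, $\operatorname{im}\Delta\subseteq\bigoplus_{z\neq 1}H_1(\cL_z M)$ in degree $n-1\geq 1$, is the same observation as the paper's check that nothing in $\H^1$ hits the unit.

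Your axis argument for $G_z\cong\Z$ is more careful than the paper's, whose claim that any $g\in G_\gamma$ not a power of $\gamma$ yields a $\Z\oplus\Z$ fails when $\gamma$ is a proper power. Your intermediate assertion that every finitely generated subgroup of $G_z$ containing $z$ is cyclic does not follow from applying Preissmann to the subgroups $\langle z,g\rangle$, but the geometric step makes it unnecessary.
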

\begin{proof}
    For the trivial conjugacy class $\langle 1\rangle$, we have $H_*(G_{1})=H_*(G)=H_*(M)$. For a non-trivial conjugacy class $\langle\gamma\rangle$, the powers $\gamma^k$ lie in $G_{\gamma}$. If there is an element $g\in G$ such that $g\gamma g^{-1}=\gamma$ and $g$ is not a power of $\gamma$, then $g$ and $\gamma$ generate an abelian subgroup containing $\Z\oplus\Z$ in $\pi_1(M)$, contradicting Preissmann's theorem. Hence $G_{\gamma}\cong\Z$ and $H_*(G_{\gamma})=H_*(\Z)=H_*(S^1)$. Therefore $\H^*(M)=\mathrm{HH}_{n-*}(R[G])=H_{n-*}(M)\oplus\bigoplus_{\langle\gamma\rangle\neq 1}H_{n-*}(S^1)$ by Theorems~\ref{HH=HL} and~\ref{decom}.

    Note that $\H^0(M)$ is generated by the fundamental class of $M$ via constant loops; everything else has cohomological degree at least $1$. By degree considerations, an invertible element of $\H^*(M)$ must, up to a nonzero scalar, be of the form $1+a$ where $a$ has higher degree. If a quasi-dilation exists, then by degree reasons we can find $e_1\in \H^1(M)$ such that $\Delta(e_1)=1$. When $n\ge 3$, such an $e_1$ must come from $H_{n-1}(M)$, contradicting $\Delta=0$ on the homology of constant loops, i.e.\ on the $H_*(M)$ component of $\H^*(M)$. When $n=2$, $e_1$ could come from either $H_1(M)$ or $H_1(S^1)$ by degree reasons; this contradicts both $\Delta=0$ on constant loops and the fact that $\Delta$ preserves the homotopy class of loops.
\end{proof}
\begin{remark}
    Proposition~\ref{prop:negaCurv} can also be derived from Morse theory on the loop space of $M$, since the existence of negative curvature guarantees that the Morse index of every non-constant closed geodesic orbit is zero \cite[Lemma 2.2]{CM18}. The loop homology generated by all non-constant closed geodesic orbits is therefore concentrated in two degrees, as above.
\end{remark}

\section{Proof of \Cref{thm:main}}\label{s4}
\subsection{Chord conjecture for $ST^*T^3$}
We first recall Mohnke's proof \cite{Moh01} in order to motivate ours. It is convenient to replace $A_{\min}(L,W)$ and $A_{\min}(W)$ by $A'_{\min}(L,W)$ and $A'_{\min}(W)$ as in Proposition~\ref{prop:truncate}; that is, for a Lagrangian $L$ in a Liouville domain $(W,\lambda)$,

$$A'_{\min}(L,W):=\inf \left\{\langle \gamma,\, \lambda|_L \rangle\,\left|\, \gamma\in H_1(L),\ \langle \gamma,\lambda|_L\rangle>0 \right.\right\},\qquad A'_{\min}(W)=\sup_{L\subset W} A'_{\min}(L,W).$$
When $\gamma$ is represented by a loop that is contractible in $W$, then $\langle \gamma,\lambda|_L\rangle=\int u^*\omega$ for any extending disk $u:D^2\to W$ with $u|_{\partial D^2}=\gamma$. From this it follows that $A'_{\min}\le A_{\min}$. Hence Mohnke's argument is not affected, while the modified $A'_{\min}$ is easier to work with.

For any $(\dim_{\C}W-1)$-dimensional closed manifold $\Lambda$ and an embedding $\iota:\Lambda \to W$, we use $L_{\Lambda,\iota}$ to denote a Lagrangian of the form $\Lambda\times S^1$ such that the $S^1$-factor is contractible in $W$ and $\Lambda \times \{*\}$ is isotopic to the embedding $\iota$. We use $\cL_{\Lambda,\iota}$ to denote the set of all such Lagrangians. We then define
$$A'_{\min}(\cL_{\Lambda,\iota},W) := \sup \left\{A'_{\min}(L,W)\,\big|\, L\in \cL_{\Lambda,\iota}\right\}.$$

\begin{theorem}[\cite{Moh01}]\label{thm:Mohnke}
    Let $W$ be a Liouville domain. If 
    \[
    A'_{\min}(\cL_{\Lambda,\iota},\, W)<+\infty,
    \]
    then the chord conjecture holds for $(\partial W,\ker\lambda|_{\partial W})$ for every Legendrian $\Lambda \subset \partial W$ such that the inclusion $\Lambda \hookrightarrow W$ is isotopic to $\iota$.
\end{theorem}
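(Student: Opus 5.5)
We argue by contradiction and follow Mohnke's construction. Suppose $\Lambda\subset\partial W$ is a closed Legendrian, with $\Lambda\hookrightarrow W$ isotopic to $\iota$, that has no Reeb chord for the contact form $\alpha=\lambda|_{\partial W}$. We will build, for every $T>0$, a Lagrangian $L_T\in\cL_{\Lambda,\iota}$ with $A'_{\min}(L_T,W)\ge cT$ for some fixed $c>0$. Then $A'_{\min}(\cL_{\Lambda,\iota},W)=+\infty$, contradicting the hypothesis.

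\textbf{Step 1 (flowing $\Lambda$ and building a Lagrangian cylinder in the symplectization).} Let $\phi_t$ be the Reeb flow of $\alpha$. The absence of Reeb chords means $\phi_t(\Lambda)\cap\Lambda=\emptyset$ for all $t\neq 0$. By compactness, $\phi_t(\Lambda)\cap\Lambda=\emptyset$ for all $t\in(0,T]$ in the given range, so the map $\Lambda\times[0,T]\to\partial W$, $(x,t)\mapsto\phi_t(x)$, is an embedding. In the symplectization collar $(0,1]\times\partial W$ with form $d(s\alpha)$, consider an embedded curve $\sigma(\theta)=(s(\theta),t(\theta))$ in the $(s,t)$-plane, where $t$ is the Reeb time coordinate. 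The submanifold $\{(s(\theta),\phi_{t(\theta)}(x))\}$ is Lagrangian. This holds because $\phi_t$ preserves $\alpha$, $\Lambda$ is Legendrian, and the Reeb direction pairs with $\partial_s$; concretely, the pullback of $s\alpha$ is $s(\theta)\,t'(\theta)\,d\theta$, which is closed. We choose $\sigma$ to be a closed loop in the strip $(0,1)\times(0,T)$. Then $L_\sigma\cong\Lambda\times S^1$ is an embedded Lagrangian, and since the $S^1$-factor lies in a single $(s,t)$-disk slice through a point of $\Lambda$, it is contractible in $W$. Moreover $\Lambda\times\{*\}$ is isotopic to $\iota$, as it is a pushed copy of $\Lambda$ in the collar. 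Hence $L_\sigma\in\cL_{\Lambda,\iota}$.

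\textbf{Step 2 (computing periods).} We have $H_1(L_\sigma)\cong H_1(\Lambda)\oplus\Z[S^1]$. On a class $\gamma\subset\Lambda\times\{\theta_0\}$, $\lambda|_{L}$ restricts to $s(\theta_0)\,\phi_{t(\theta_0)}^*\alpha|_\Lambda=0$, because $\Lambda$ is Legendrian. On the $S^1$-class, $\lambda|_L$ integrates to $\oint s\,dt$, which is the signed area enclosed by $\sigma$. Since every class has the form $\gamma+k[S^1]$, the set of positive periods is $\{k\cdot\mathrm{Area}(\sigma)\}_{k\ge 1}$. Therefore $A'_{\min}(L_\sigma,W)=\mathrm{Area}(\sigma)$, which can be made as large as roughly $T$ (times a constant depending on the chosen range of $s$) by letting $\sigma$ fill most of $[\epsilon,1-\epsilon]\times[\epsilon,T-\epsilon]$.

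\textbf{Step 3 (conclusion and the main obstacle).} Letting $T\to\infty$ gives $A'_{\min}(\cL_{\Lambda,\iota},W)=\infty$, a contradiction. The delicate point is Step 1: the flow image must be embedded for all times up to $T$, not merely $\phi_t(\Lambda)\cap\Lambda=\emptyset$. Injectivity of $(x,t)\mapsto\phi_t(x)$ on $\Lambda\times[0,T]$ requires $\phi_{t_1}(x)\neq\phi_{t_2}(y)$ for $t_1\neq t_2$. This follows because $\phi_{t_2-t_1}(y)\in\Lambda$ would give a chord of length $|t_2-t_1|$. An immersion plus injectivity then yields an embedding by compactness. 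We also need to check the Lagrangian condition carefully, including the sign conventions, and that the curve $\sigma$ can be taken to avoid $t=0$ issues. Once these are verified, the argument closes.
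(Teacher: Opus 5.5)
Your construction of the Lagrangians $\Lambda\times\sigma$ in the collar, the period computation $A'_{\min}(L_\sigma,W)=\mathrm{Area}(\sigma)$, and the injectivity argument are correct, and they are exactly the Mohnke argument the paper uses. The gap is in your first line. You assume $\Lambda$ has no Reeb chord for the particular form $\alpha=\lambda|_{\partial W}$. The statement, however, asserts the chord conjecture for the contact manifold $(\partial W,\ker\lambda|_{\partial W})$, that is, for \emph{every} contact form $\alpha=f\lambda|_{\partial W}$ with $f>0$ defining this contact structure; this is how the conjecture is formulated in the introduction. The Reeb flow of $f\lambda|_{\partial W}$ has in general nothing to do with that of $\lambda|_{\partial W}$, so as written you have only proved the case $f\equiv 1$.

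The missing step is the rescaling reduction with which the paper's proof begins. Pick $r_0>0$ with $r_0f\le 1$. Then the graph $\{r=r_0f\}$ lies in the collar $(0,1]\times\partial W$ and bounds a Liouville subdomain $W'\subset W$, on whose boundary $\lambda$ restricts to $r_0\alpha$ under the obvious identification with $\partial W$. Reeb chords of $r_0\alpha$ are reparametrized chords of $\alpha$, and $\Lambda$ is carried along the Liouville flow to a Legendrian in $\partial W'$ in the same isotopy class. Since $W'$ carries the restricted Liouville form, every $L\in\cL_{\Lambda,\iota}$ built in $W'$ also lies in $\cL_{\Lambda,\iota}$ for $W$, with identical periods. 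Hence $A'_{\min}(\cL_{\Lambda,\iota},W')\le A'_{\min}(\cL_{\Lambda,\iota},W)<\infty$, and your construction, run in the collar of $\partial W'$, gives the contradiction.

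Equivalently, the map $(s,y)\mapsto(sf(y),y)$ pulls $r\lambda|_{\partial W}$ back to $s\alpha$. So the piece $(0,\epsilon]\times\partial W$ of the $\alpha$-symplectization embeds exactly into the collar once $\epsilon\max f\le 1$, and loops in $(0,\epsilon)\times(0,T)$ give areas of order $\epsilon T\to\infty$.
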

\begin{proof}\label{pro:Mohnke}
Let $\alpha$ be a contact form on $\partial W$, which need not coincide with $\lambda|_{\partial W}$. We have $\alpha = f\lambda|_{\partial W}$ for some positive function $f$ on $\partial W$. We can find $r_0 > 0$ such that $0<r_0f \leq 1$; that is, the contact hypersurface $\{r = r_0f\}$ is contained in the collar neighborhood $(0,1]\times \partial W$ of $\partial W$, and hence in $W$. Then $\lambda$ restricts to $r_0\alpha$ on this hypersurface. Since the Reeb flows of $r_0\alpha$ and $\alpha$ agree up to rescaling, the chord conjecture holds for $\alpha$ if and only if it holds for $r_0\alpha$. Let $W'$ denote the exact subdomain bounded by $\{r = r_0f\}$. It is straightforward to check that
\[
A'_{\min}(\cL_{\Lambda,\iota},\, W') \leq A'_{\min}(\cL_{\Lambda,\iota},\, W) < \infty.
\]
Let $\phi_t$ denote the Reeb flow on $\partial W'$. Assume that the Legendrian $\Lambda$ has no Reeb chord. Then 
\begin{equation}
    \begin{split}
        \Phi : \Lambda \times [\epsilon, 1] \times \R &\to \bigl(\partial W' \times [\epsilon, 1]_{r'},\, \mathrm{d}(r'\lambda|_{\partial W'})\bigr), \\
        (x, s, t) &\mapsto (\phi_t(x), s)
    \end{split}
\end{equation}
is an embedding, where $r'$ is the cylindrical coordinate for the symplectization of $\partial W'$. Any embedded loop $\gamma \subset [\epsilon, 1]_s \times \mathbb{R}_t$ gives rise to a Lagrangian submanifold $\Phi(\Lambda \times \gamma) \subset \partial W' \times [\epsilon, 1] \subset W'$. Since $\lambda|_{\Lambda} = 0$ and $\Phi^*(\mathrm{d}(r'\lambda|_{\partial W'})) = \mathrm{d} s \wedge \mathrm{d} t$, the quantity $A'_{\min}(\Phi(\Lambda \times \gamma), W')$ equals the area enclosed by $\gamma$, which can be arbitrarily large. This contradicts the assumption that $A'_{\min}(\cL_{\Lambda,\iota},\, W')<\infty$.
\end{proof}

We begin with the capping-off construction and prove the chord conjecture for $ST^*T^3$ in the case of Legendrian surfaces of genus $\leq 1$. We write $(DT^*S^1,\lambda_{DT^*S^1}) = ([-1,1]_p\times S^1_{\theta},\, p\,\mathrm{d}\theta)$.

\begin{lemma}[Capping-off construction]\label{lem:capping}
Let $L_{\Lambda,\iota}\in \cL_{\Lambda,\iota}$ and assume in addition that the composition
$$H_1(\Lambda;\Q)\xrightarrow{\iota_*} H_1(V\times DT^*S^1;\Q)\xrightarrow{p_*}H_1(DT^*S^1;\Q)$$
is zero. Then $A'_{\min}(\cL_{\Lambda,\iota},\, V\times DT^*S^1)<\infty$.
\end{lemma}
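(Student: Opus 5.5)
The strategy is to "cap off" the $DT^*S^1$ factor: embed $V\times DT^*S^1$ symplectically into a domain with finite minimal area, and show that $A'_{\min}$ of any $L\in\cL_{\Lambda,\iota}$ is bounded above by a quantity computed in that bigger domain. Concretely, $DT^*S^1=[-1,1]_p\times S^1_\theta$ embeds symplectically into a disk $\D$ (or a large ball region of $\C$) by an area-preserving map filling in the $\{p=-1\}$ end, e.g.\ $(p,\theta)\mapsto \sqrt{(p+1+c)/\pi}\,e^{i\theta}$ for a constant $c>0$. This gives a symplectic embedding $V\times DT^*S^1\hookrightarrow V\times \D$, where $V\times\D$ is a Liouville domain (after smoothing corners) that is displaceable, with $SH^*(V\times \D)=0$. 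By the results of \cite{Moh01,Zho22}, $A_{\min}(V\times\D)<\infty$; more precisely, every closed Lagrangian in $V\times \D$ bounds a nonconstant disk of positive area at most some constant $C$ (the displacement energy, or the threshold coming from $SH=0$ via the truncated transfer of Proposition~\ref{prop:truncate}).

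The key point is to compare the two areas on $H_1(L)$. On $V\times DT^*S^1$ the Liouville form is $\lambda_V+p\,d\theta$, while on $V\times\D$ it is $\lambda_V+\lambda_\D$. Their difference pulls back under the embedding to a closed $1$-form, namely a multiple $c'\,d\theta$ with $c'$ constant. Thus for $\gamma\in H_1(L)$ the two periods differ by $c'\langle p_*\gamma, d\theta\rangle$. Now $H_1(L;\Q)=H_1(\Lambda;\Q)\oplus \Q[S^1]$. The $S^1$-factor is contractible in $V\times DT^*S^1$, and the hypothesis says that $p_*\iota_*$ vanishes rationally on $H_1(\Lambda)$. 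Hence $p_*\gamma$ is torsion, so it is zero in $H_1(DT^*S^1)=\Z$, and $d\theta$ pairs trivially with every $\gamma$. Therefore the period map $\langle\cdot,\lambda|_L\rangle$ on $H_1(L)$ is the same for both Liouville structures, and so $A'_{\min}(L,V\times DT^*S^1)=A'_{\min}(L,V\times\D)$.

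It then remains to bound $A'_{\min}(L,V\times\D)$ uniformly. Since $A'_{\min}\le A_{\min}$, any positive-area disk $u\in\pi_2(V\times\D,L)$ of area $\le C$ yields a class $\partial u\in H_1(L)$ with a positive period $\le C$. So $A'_{\min}(L,V\times\D)\le A_{\min}(L,V\times\D)\le A_{\min}(V\times\D)<\infty$, and the bound is independent of $L$. Taking the supremum over $\cL_{\Lambda,\iota}$ gives the claim.

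The main obstacle is justifying the finiteness of $A_{\min}(V\times\D)$ in the precise form needed. The domain $V\times\D$ has corners, and the Liouville form near the capped end must be arranged so that the embedding is exact up to the closed form $c'\,d\theta$. I would first try Chekanov/Mohnke-type displacement-energy bounds: $V\times\D$ is displaceable inside $\widehat V\times\C$ by translating in $\C$, which gives a disk of area at most the displacement energy through every Lagrangian. If that is inconvenient, I would fall back on \cite{Zho22}'s result that $SH^*=0$ implies $A_{\min}<\infty$, using the Künneth vanishing $SH^*(V\times\D)=0$.
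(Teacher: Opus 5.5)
Your proposal is correct and is essentially the paper's proof. Both arguments cap off $DT^*S^1$ inside a disk and note that the two Liouville forms differ by a constant multiple of $d\theta$. This difference is exact on $L$ because of the hypothesis on $p_*\iota_*$ and the contractibility of the $S^1$-factor. Both then conclude
\[
A'_{\min}(L,V\times DT^*S^1)= A'_{\min}(L,V\times D^2)\le A_{\min}(V\times D^2)<\infty
\]
via \cite[Theorem 1.3 (1)]{Zho22} and the K\"unneth vanishing $SH^*(V\times D^2;\Z)=0$; your displacement-energy alternative (Mohnke/Chekanov) would serve equally well for this last step.

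One small slip in your example map: with $\theta\in\R/2\pi\Z$ and the standard area form on $\C$, the map is area-preserving only if the radius is $\sqrt{2(p+1+c)}$ rather than $\sqrt{(p+1+c)/\pi}$. This is harmless for the argument.
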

\begin{proof}
Let $(D^2,\lambda_{D^2}=r^2\,\mathrm{d}\theta)$ denote the disk of radius $2$. We have a symplectic embedding
$$\iota_2:(DT^*S^1,\, \mathrm{d}\lambda_{DT^*S^1}) \to (D^2,\, \mathrm{d}\lambda_{D^2}),\qquad (p,\theta) \mapsto (\sqrt{p+2},\,\theta).$$
The difference between $\iota_2^*\lambda_{D^2}$ and $\lambda_{DT^*S^1}$ is
$$\alpha := \iota_2^*\lambda_{D^2}-\lambda_{DT^*S^1}=2\,\mathrm{d}\theta.$$
From $\iota_2$, we obtain a symplectic embedding
$$\iota_1:V\times DT^*S^1 \to V\times D^2,\qquad (x,y)\mapsto (x,\iota_2(y)).$$
In particular, $\iota_1^*(\lambda_{V\times D^2})-\lambda_{V\times DT^*S^1} = 2\,\mathrm{d}\theta$. Since the $S^1$-factor is contractible and $p_*\circ \iota_*=0$, the restriction $\bigl(\iota_1^*(\lambda_{V\times D^2})-\lambda_{V\times DT^*S^1}\bigr)\big|_{\Lambda\times S^1}$ is exact, as one checks by pairing with $H_1(\Lambda\times S^1;\Q)$. Therefore
$$A'_{\min}(\cL_{\Lambda,\iota},\, V\times DT^*S^1)= A'_{\min}(\cL_{\Lambda,\iota_1\circ \iota},\, V\times D^2)\le A'_{\min}(V\times D^2)<\infty,$$
where the last inequality follows from \cite[Theorem 1.3 (1)]{Zho22} together with $SH^*(V\times D^2;\Z)=0$, which holds by \cite{Oan06}.
\end{proof}

\begin{remark}
\cite[Theorem 1.3]{Zho22} states the condition as $SH^*(W;R)=0$ for any ring $R$. However, the local-system issue \cite{Kra18} was overlooked there. In fact, one needs to take $R=\Z$ or $\Z/2$, since $SH^*(T^*M;\Z)$, $SH^*(T^*M;\Z/2)\ne 0$ for any $M$. For a general $R$, one needs to assume in addition that the Lagrangian is spin. 
\end{remark}

\begin{corollary}\label{cor:others}
    If $\Sigma$ is diffeomorphic to $S^2$, $\R P^2$, $T^2$, or the Klein bottle $K^2$, then $A'_{\min}(\cL_{\Sigma,\iota},\, DT^*T^3)<\infty$.
\end{corollary}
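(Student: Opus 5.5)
We reduce to the capping-off construction of Lemma~\ref{lem:capping}. Write $DT^*T^3 = DT^*T^2\times DT^*S^1$, up to deforming the disk bundle to a product domain, which does not change $A'_{\min}$ beyond rescaling and so does not affect finiteness. Here $V=DT^*T^2$. The freedom we exploit is the choice of splitting: for any primitive class $c\in H^1(T^3;\Z)$ there is a linear automorphism of $T^3$, which lifts to an exact symplectomorphism of $T^*T^3$, making $c$ the coordinate $\theta$ of the last $S^1$ factor. By Lemma~\ref{lem:capping}, it suffices to find a primitive $c\in H^1(T^3;\Z)$ with
$$c\circ \iota_* = 0 \quad \text{on } H_1(\Sigma;\Q),$$
where $\iota_*:H_1(\Sigma;\Q)\to H_1(DT^*T^3;\Q)\cong H_1(T^3;\Q)\cong\Q^3$.

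We handle the four surfaces in turn. For $S^2$ and $\R P^2$ we have $H_1(\Sigma;\Q)=0$, so any $c$ works. For the Klein bottle, $H_1(K^2;\Q)\cong \Q$, so the image of $\iota_*$ is a subspace of $\Q^3$ of dimension at most $1$. For $T^2$, $H_1(T^2;\Q)\cong\Q^2$, so the image has dimension at most $2$.

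In every case the image $U=\iota_*H_1(\Sigma;\Q)\subset \Q^3$ has dimension at most $2$. Its annihilator in $H^1(T^3;\Q)$ is therefore a nonzero rational subspace. Such a subspace contains a nonzero integral vector, and dividing by the gcd of its entries gives a primitive $c\in H^1(T^3;\Z)$ vanishing on $U$.

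Extend $c$ to a $\Z$-basis of $H^1(T^3;\Z)$, equivalently choose $A\in GL_3(\Z)$. The induced diffeomorphism of $T^3$ lifts to a cotangent symplectomorphism $\Psi$ preserving $p\,dq$, so $\Psi$ maps $DT^*T^3$ to a fiberwise-convex disk bundle. After an isotopy of the radius function we may assume this is the product domain $DT^*T^2\times DT^*S^1$ with $\theta$ corresponding to $c$. Under $\Psi$, the set $\cL_{\Sigma,\iota}$ is carried to $\cL_{\Sigma,\Psi\circ\iota}$ and $A'_{\min}$ is preserved. The hypothesis $p_*\circ\iota_*=0$ of Lemma~\ref{lem:capping} now holds by the choice of $c$, and the lemma gives finiteness.

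The main obstacle is this identification of domains. $A'_{\min}$ is defined for a fixed Liouville domain, while the product $DT^*T^2\times DT^*S^1$ has corners and differs from the round disk bundle. We resolve it by noting that $W\subset W'\subset CW$ for nested star-shaped domains, and that rescaling by $C$ scales $A'_{\min}$ by $C$. The relevant Lagrangians can also be pushed into a smaller domain by the Liouville flow, which rescales the form. Hence finiteness transfers between comparable domains, and smoothing corners is harmless. The same argument shows the ambiguity from deforming $\Psi(DT^*T^3)$ is harmless.
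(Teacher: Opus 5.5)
Your proposal is correct and is essentially the paper's proof. The paper picks $\rho\in GL(3,\Z)$ so that $\rho\circ\iota_*$ lands in $\Z^2\oplus\{0\}$ and then caps off the last $DT^*S^1$ factor via Lemma~\ref{lem:capping}; this is the dual of your choice of a primitive class $c\in H^1(T^3;\Z)$ annihilating $\iota_*H_1(\Sigma;\Q)$, extended to a basis. Your last paragraph, comparing the round disk bundle with the smoothed product domain $DT^*T^2\times DT^*S^1$ by inclusion and Liouville rescaling, fills in a step the paper leaves implicit, and it is sound.
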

\begin{proof}
The image of $\iota_*: H_1(\Sigma;\Z) \to H_1(DT^*T^3;\Z) \cong H_1(T^3;\Z)=\Z^3$ is a subgroup of rank at most $2$, so we can find an isomorphism $\rho\in GL(3,\Z)$ such that $\rho\circ \iota_*$ has image contained in $\Z^2\oplus \{0\}\subset \Z^3$. Treating $\rho$ as a diffeomorphism of $T^3$, we may therefore assume from the outset that $\iota_*$ has this property with respect to the splitting $H_1(T^3;\Z)=H_1(S^1;\Z)\oplus H_1(S^1;\Z)\oplus H_1(S^1;\Z)=\Z^3$. By Lemma~\ref{lem:capping}, we can cap off the last $DT^*S^1$ factor and conclude that $A'_{\min}(\cL_{\Sigma,\iota},\, DT^*T^3)$ is finite, with an upper bound depending only on the isotopy class of $\iota$.
\end{proof}

\begin{proof}[Proof of \Cref{thm:chord}]
Let $\iota_1$ denote the embedding $V\hookrightarrow W$. Since $H_1(\Lambda;\Q)=0$, $A'_{\min}(L_{\Lambda,\iota},V)$ is contributed only by the $S^1$-factor of $\Lambda\times S^1$, and its value is the same whether computed in $V$ or in $W$: the $S^1$-factor is contractible in $V$ (so the pairing equals the symplectic area of a bounding disk), and $W$ is exact (so this symplectic area is independent of the choice of bounding disk). Therefore
$$A'_{\min}(\cL_{\Lambda,\iota},V)\le A'_{\min}(\cL_{\Lambda,\iota_1\circ \iota},W)\le A'_{\min}(W)\le A_{\min}(W)<\infty,$$
where the last bound is provided by \cite[Theorem 1.3 (1)]{Zho22}.

For \eqref{c2}, it suffices to prove that if $\Lambda$ is a closed manifold admitting a metric of negative sectional curvature, then $A'_{\min}(\cL_{\Lambda,\iota},\, V\times DT^*S^1)<\infty$ for any Liouville domain $V$ and any isotopy class $\iota$.
Assume for contradiction that $A'_{\min}(\cL_{\Lambda,\iota},\, V\times DT^*S^1)=\infty$. We may assume $SH^*(V;\Z/2)\ne 0$, for otherwise the result follows directly from \cite[Theorem 1.3 (1)]{Zho22}. On $SH^*(V\times DT^*S^1;\Z/2)\cong SH^*(V;\Z/2)\otimes SH^*(DT^*S^1;\Z/2)$, we have a quasi-dilation $(x,y):=(1\otimes t,\, 1\otimes t\otimes a)$ represented by non-contractible orbits. By Proposition~\ref{prop:BV}, this quasi-dilation $(x,y)$ is visible in $SH^*_{<A}(V\times DT^*S^1;\Z/2)$ for some $A$. Choose $L_{\Lambda,\iota}\in \cL_{\Lambda,\iota}$ such that $A'_{\min}(L_{\Lambda,\iota},\,V\times DT^*S^1)> 4A$. By Proposition~\ref{prop:truncate}, we obtain truncated Viterbo transfers
$$SH^*_{<A\text{ or }2A}(V\times DT^*S^1;\Z/2)\to SH^*(T^*L_{\Lambda,\iota};\Z/2),$$
compatible with the (truncated) BV algebra structure. Consequently, the image of $(x,y)$ is a quasi-dilation in $SH^*(T^*L_{\Lambda,\iota};\Z/2)\cong \H^*(\Lambda;\Z/2)\otimes \H^*(S^1;\Z/2)$. By Proposition~\ref{prop:negaCurv}, $\H^*(\Lambda;\Z/2)$ does not admit a quasi-dilation. Hence every quasi-dilation in $\H^*(\Lambda;\Z/2)\otimes \H^*(S^1;\Z/2)$ must be of the form $(1\otimes t^i,\, 1\otimes t^i\otimes a)$ for an odd integer $i\in \Z$, coming from the $S^1$-factor and represented by a non-trivial homotopy class. However, since the $S^1$-factor of $L_{\Lambda,\iota}$ is contractible in $W$ and the Viterbo transfer respects homotopy classes of generators, the image of $(x,y)$ under the Viterbo transfer cannot be of this form. This is a contradiction.
\end{proof}
\begin{remark}\label{rmk:quasi-dilation}
    The proof of Theorem~\ref{thm:chord}\eqref{c2} still works if $V\times DT^*S^1$ is replaced by any Liouville domain $W$ whose symplectic cohomology admits a quasi-dilation represented by non-contractible orbits.
\end{remark}

We can now assemble the above results to obtain \Cref{thm:main}.
\begin{proof}[Proof of Theorem~\ref{thm:main}]
By Theorems~\ref{thm:chord}\eqref{c2} and~\ref{thm:Mohnke}, the chord conjecture holds for any hyperbolic Legendrian surface in $ST^*T^3$. The low-genus case follows from Corollary~\ref{cor:others} and Theorem~\ref{thm:Mohnke}.
\end{proof}

\subsection{Other examples}
The following is a reformulation of the proof of \cite[Theorem 1.3 (2)]{Zho22}.
\begin{proposition}\label{prop:dia}
Let $W$ be a Liouville domain such that $SH^*(W;R)$ admits a dilation, and let $\Lambda$ be a closed $(\dim_{\C}W-1)$-dimensional manifold such that $SH^*(T^*\Lambda;R)$ admits no dilation. Then $A'_{\min}(\cL_{\Lambda,\iota},\, W)<\infty$ for any $\iota$.
\end{proposition}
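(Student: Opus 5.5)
We argue by contradiction, following the scheme of the proof of Theorem~\ref{thm:chord}\eqref{c2}. Suppose $A'_{\min}(\cL_{\Lambda,\iota},W)=\infty$ for some $\iota$. Since $SH^*(W;R)$ admits a dilation $(1,e_1)$ with $\Delta(e_1)=1$, and $SH^*(W;R)$ is the direct limit of the filtered groups, we first choose $A>0$ so that $e_1$ is represented at the filtered level $SH^*_{<A}(W;R)$. We also require that the relation $\Delta(e_1)=1$ holds there, where $1$ is the image of the unit of $H^*(W;R)$, possibly after enlarging $A$. This uses that $\Delta$ preserves the action filtration (Proposition~\ref{prop:BV}) and that any relation in the direct limit already holds at some finite level.

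Next we choose $L=L_{\Lambda,\iota}\in\cL_{\Lambda,\iota}$ with $A'_{\min}(L,W)>4A$, which is possible by the assumption. Proposition~\ref{prop:truncate} then gives a truncated Viterbo transfer
\[
SH^*_{<A}(W;R)\to SH^*(T^*L;R)
\]
that commutes with $\Delta$. It is also compatible with the long exact sequence, so it sends the unit coming from $H^*(W)$ to the unit of $H^*(T^*L)$, because the restriction $H^*(W)\to H^*(L)$ is a unital ring map. Hence the image $\tilde e_1$ of $e_1$ satisfies $\Delta(\tilde e_1)=1$, and $SH^*(T^*L;R)$ admits a dilation. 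Only the BV-operator compatibility and unitality are needed here, not the product compatibility, so the bound $>2A$ would already suffice; we take $>4A$ to be safe.

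Finally, $L\cong\Lambda\times S^1$, so $T^*L\cong T^*\Lambda\times T^*S^1$, and by the K\"unneth formula \cite{Oan06}
\[
SH^*(T^*L;R)\cong SH^*(T^*\Lambda;R)\otimes SH^*(T^*S^1;R).
\]
We must show that a dilation here forces a dilation on $SH^*(T^*\Lambda;R)$, which contradicts the hypothesis. The plan is to project onto the summand of the homotopy-class grading corresponding to the trivial class in the $S^1$-factor. Concretely, use the $\Z$-grading of $SH^*(T^*S^1)$ by powers $t^i$, and note that $\Delta$ preserves homotopy classes. On the $t^0$-part, $SH^*(T^*S^1)_{t^0}=\wedge[a]$ with $\Delta=0$ by Example~\ref{BVS1}. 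Write the $t^0$-component of $\tilde e_1$ as $u\otimes 1+v\otimes a$. Then $\Delta(\tilde e_1)=1\otimes 1$ gives $\Delta u\otimes 1\pm \Delta v\otimes a=1\otimes 1$, so $\Delta u=1$ in $SH^*(T^*\Lambda;R)$, a dilation. This is a contradiction.

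The main obstacle is making the Künneth step rigorous, namely that it intertwines BV operators as $\Delta\otimes 1\pm1\otimes\Delta$ and respects the homotopy-class decomposition. Over a field this is standard. Over a general ring $R$, Künneth may include Tor terms, so I would instead apply the homotopy-class splitting directly on $SH^*(T^*L)$, or work in loop homology $\H^*(\Lambda\times S^1)$ via Theorem~\ref{Vitiso}. There one uses the retraction $\cL(\Lambda\times S^1)\supset \cL\Lambda\times\cL_0 S^1\simeq \cL\Lambda\times S^1$ together with the $S^1$-equivariant projection to $\cL\Lambda$, which avoids Tor issues.
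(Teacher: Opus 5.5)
Your proof is correct and follows essentially the intended route. The paper gives no separate proof and only cites the proof of \cite[Theorem 1.3(2)]{Zho22}: realize the dilation at a finite action level $A$, choose $L\in\cL_{\Lambda,\iota}$ with $A'_{\min}(L,W)>2A$, and push the dilation through the truncated Viterbo transfer of Proposition~\ref{prop:truncate} to obtain a dilation in $SH^*(T^*L;R)$. Your projection onto the trivial $S^1$-winding summand, where $\Delta=\Delta_\Lambda\otimes 1$ on $\H^*(\Lambda)\otimes\wedge[a]$, makes explicit the passage from $T^*(\Lambda\times S^1)$ to $T^*\Lambda$, a step the paper's formulation leaves implicit (in the proof of Theorem~\ref{thm:CotBundle} it sometimes checks $\Lambda\times S^1$ directly instead).
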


\begin{proof}[Proof of Theorem~\ref{thm:CotBundle}]
    We first prove \eqref{cot1}. By Examples~\ref{BVS1}, \ref{BVS2}, \ref{BVRP2} and Remark~\ref{BVT2}, $SH^*(DT^*S^3;\Z/2)$ admits a $\Z/2$-dilation, while $SH^*(DT^*(\Lambda\times S^1);\Z/2)\cong \H^*(\Lambda;\Z/2)\otimes \H^*(S^1;\Z/2)$ does not, for any surface $\Lambda$. Then $A'_{\min}(\cL_{\Lambda,\iota},\, DT^*S^3)<\infty$ by Proposition~\ref{prop:dia}, so the chord conjecture holds.

    For $ST^*(S^1\times S^2)$, the chord conjecture holds for Legendrian surfaces that are not $T^2$ or $K^2$ by Theorem~\ref{thm:chord}. When the Legendrian is $T^2$ or $K^2$, we instead apply Proposition~\ref{prop:dia}, since $SH^*(T^*(S^1\times S^2);\Q)$ admits a dilation by Example~\ref{BVS2}.

    To prove \eqref{cot2}, recall from \cite{Cie02} that $SH^*(V_1\natural V_2)\cong SH^*(V_1)\oplus SH^*(V_2)$ as BV algebras when $\dim V_1\geq 4$, where $V_1\natural V_2$ denotes the boundary connected sum of $V_1$ and $V_2$, i.e.\ $V_1$ and $V_2$ joined by a Weinstein $1$-handle. Therefore, an arbitrary connected sum among the listed collection carries either a dilation or a quasi-dilation represented by non-contractible orbits; thus, the proof of Theorem~\ref{thm:chord}\eqref{c2} implies that the chord conjecture holds for all hyperbolic Legendrians.
    
    It remains to prove the chord conjecture for Legendrians $\Lambda = S^2,\,\R P^2,\,T^2,\,K^2$. For $DT^*(S^1\times \Sigma)$, we can cap off the $DT^*S^1$ factor to $D^2$ as in Lemma~\ref{lem:capping}; this yields a symplectic embedding
    $$\iota_1:DT^*(S^1\times \Sigma)\,\natural\, V \hookrightarrow (D^2\times DT^*\Sigma)\,\natural\, V.$$
    Similarly to the proof of Lemma~\ref{lem:capping}, for an embedding $\iota:\Lambda \to DT^*(S^1\times \Sigma)\,\natural\, V$ such that the composition
    $$\iota_*:H_1(\Lambda;\Q)\to H_1(DT^*(S^1\times \Sigma)\,\natural\, V;\Q)\to H_1(DT^*(S^1\times \Sigma);\Q)\to H_1(DT^*S^1;\Q)$$
    is zero, we have
    $$A'_{\min}(\cL_{\Lambda,\iota},\, DT^*(S^1\times \Sigma)\,\natural\, V) = A'_{\min}(\cL_{\Lambda,\iota_1\circ\iota},\, (D^2\times DT^*\Sigma)\,\natural\, V).$$
    Therefore, if $\Lambda=S^2$ or $\R P^2$, we can cap off all $DT^*(S^1\times \Sigma)$ factors and reduce the consideration of $A'_{\min}$ to a domain of the form $\natural^k DT^*S^3\,\natural\, V$ with $SH^*(V;\Z)=0$. Since $SH^*(DT^*S^3;\Z/2)$ admits a dilation while $SH^*(DT^*S^2;\Z/2)$ and $SH^*(DT^*\R P^2;\Z/2)$ do not, by Examples~\ref{BVS2} and~\ref{BVRP2}, Proposition~\ref{prop:dia} implies that the chord conjecture holds.

    For $\Lambda=T^2$ or $K^2$, in the same spirit as the proof of Corollary~\ref{cor:others}, we can cap off the $DT^*T^3$ components to $D^2\times DT^*T^2$ in a suitable way, and reduce to considering $A'_{\min}(L_{T^2,\iota},W)$ where
    $$W = \natural^{k_1}DT^*S^3\,\natural^{k_2}DT^*(S^1\times S^2)\,\natural^{k_3} DT^*(S^1\times \R P^2)\,\natural\, V$$
    with $SH^*(V;\Q)=0$. Now $W$ admits a dilation over $\Q$ by Examples~\ref{BVS2} and~\ref{BVRP2}; the conclusion then follows from Proposition~\ref{prop:dia}.
\end{proof}

\bibliographystyle{abbrv} 
\bibliography{ref}
\Addresses

\end{document}